\renewcommand*\backref[1]{\ifx#1\relax \else (Page #1) \fi}
\newsavebox{\@brx}
\newcommand{\llangle}[1][]{\savebox{\@brx}{\(\m@th{#1\langle}\)}%
  \mathopen{\copy\@brx\kern-0.5\wd\@brx\usebox{\@brx}}}
\newcommand{\rrangle}[1][]{\savebox{\@brx}{\(\m@th{#1\rangle}\)}%
  \mathclose{\copy\@brx\kern-0.5\wd\@brx\usebox{\@brx}}}
\makeatletter \@addtoreset{equation}{section}
\newtheorem{theorem}{Theorem}[section]
\newtheorem{definition}{Definition}[section]
\newtheorem{remark}{Remark}[section]
\newtheorem{example}{Example}[section]
\newtheorem{prop}{Proposition}[section]
\newtheorem*{prop*}{Proposition}
\begin{document}
\title{On Properties of Adjoint Systems for Evolutionary PDEs}
\author{Brian K. Tran$^\dagger$}
\author{Ben S. Southworth$^\dagger$}
\author{Melvin Leok$^\ddagger$}
\address{$^\dagger$Los Alamos National Laboratory, Theoretical Division, Los Alamos, New Mexico 87545}
\address{$^\ddagger$UC San Diego, Department of Mathematics, La Jolla, CA 92093}
\email{btran@lanl.gov, southworth@lanl.gov, mleok@ucsd.edu}
\allowdisplaybreaks

\begin{abstract}
We investigate the geometric structure of adjoint systems associated with evolutionary partial differential equations at the fully continuous, semi-discrete, and fully discrete levels and the relations between these levels. We show that the adjoint system associated with an evolutionary partial differential equation has an infinite-dimensional Hamiltonian structure, which is useful for connecting the fully continuous, semi-discrete, and fully discrete levels. We subsequently address the question of discretize-then-optimize versus optimize-then-discrete for both semi-discretization and time integration, by characterizing the commutativity of discretize-then-optimize methods versus optimize-then-discretize methods uniquely in terms of an adjoint-variational quadratic conservation law. For Galerkin semi-discretizations and one-step time integration methods in particular, we explicitly construct these commuting methods by using structure-preserving discretization techniques. 
\end{abstract}

\maketitle

\tableofcontents

\section{Introduction}
In this paper, we investigate adjoint systems associated with evolution equations on infinite-dimensional Banach spaces, at the fully continuous, semi-discrete, and fully discrete levels, with the aim of addressing theoretical and practical questions in the optimization and optimal control of evolutionary partial differential equations (PDEs). 

The solution of many nonlinear optimization and optimal control problems involves successive linearization, and as such variational equations and their adjoints play a critical role in a variety of applications. Adjoint equations are of particular interest when the parameter space is of significantly higher dimension than that of the output or objective. In particular, the simulation of adjoint equations arise in sensitivity analysis~\cite{Ca1981, CaLiPeSe2003}, adaptive mesh refinement~\cite{LiPe2003}, uncertainty quantification~\cite{WaDuAlIa2012}, automatic differentiation~\cite{Gr2003}, superconvergent functional recovery~\cite{PiGi2000}, optimal control~\cite{Ro2005, CaObTr2014}, optimal design~\cite{GiPi2000}, optimal estimation~\cite{NgGeBe2016}, and deep learning viewed as an optimal control problem~\cite{DeCeEhOwSc2019}.

The study of geometric aspects of adjoint systems arose from the observation that the combination of any system of differential equations and its adjoint equations are described by a formal Lagrangian~\cite{Ib2006, Ib2007}. This naturally leads to the question of when the formation of adjoints and discretization commutes~\cite{SoTz1997}, and prior work on this include the Ross--Fahroo lemma~\cite{RoFa2001}, and the observation by \citet{Sa2016} that the adjoints and discretization commute if and only if the discretization is symplectic, in the specific setting of Runge--Kutta methods. Recently, in \cite{TrLe2024}, we investigate the symplectic and presymplectic structures for adjoint systems associated with ordinary differential equations (ODEs) and differential-algebraic equations (DAEs), respectively, and show that the processes of adjoining, discretization, and reduction (for index 1 DAE) commute with appropriate choices of these processes. 

In this paper, we extend these previous studies of discretizing adjoint systems by considering evolutionary PDEs and their associated adjoint systems, at the fully continuous, semi-discrete (i.e. discretized in space), and fully discrete levels, and investigate the connections between them. In particular, we utilize techniques from symplectic geometry to provide a precise geometric characterization of when semi-discretization, time integration, and adjoining commute.

\subsection{Discretize-then-Optimize versus Optimize-then-Discretize}\label{section:DtO-vs-OtD}
There is a vast literature on solving optimization and optimal control problems for differential equations via adjoint methods; we will not provide an exhaustive list but refer the reader to the following references and the references therein. Studies of adjoint systems for ODEs and DAEs are performed in \cite{DoZhSaSa2014, CaLiPeSe2003, TrLe2024, Sa2016}, in the context of optimal control in \cite{Ro2005, EiLa2020, Wa2007, CaObTr2014}, and in the context of neural networks in \cite{ChRuBeDu2018, GhKeBi2019, MaMiYa2023}. For PDEs, adjoint methods for time-dependent PDEs with adaptive mesh refinement is studied in \cite{LiPe2004}, adjoints for specific PDE systems are studied in \cite{NoWa2007, SaKa2000, KuCr2016, Pr2000}, and \textit{a posteriori} analysis of adjoint systems is reviewed in \cite{GiSu2002}.

One of the major conceptual questions is when to use discretize-then-optimize (DtO) methods, also known as direct methods, versus optimize-then-discretize (OtD) methods, also known as indirect methods, and in particular, what are the discrepancies between the two approaches. It is known that these two different approaches, in general, do not commute, and can lead to different discrete gradients of objective functions. 

For ODEs, it is known that DtO recovers the exact discrete gradients, i.e., DtO produces the exact gradient for the discrete-time optimization problem, whereas OtD may not (see, for example, \cite{GhKeBi2019, Sa2016}). This leads to the result that DtO methods produce gradients that are independent of the error of the method for the state variable \cite{OnRu2020}, and thus produce proper descent directions for discrete objective functions. On the other hand, the exact discrete gradient produced from DtO is not always \textit{adjoint consistent}, i.e., a consistent approximation of the continuous-time gradient, see, for example, \cite{AlSa2009}.

Extending the discussion to PDEs, one also must include the process of spatial discretization which can lead to further discrepancies in DtO and OtD \cite{LiPe2004, NoWa2007}. For Galerkin or Galerkin-like methods (e.g., discontinuous Galerkin methods), there is a similar issue of adjoint consistency for DtO methods \cite{Ha2007, Hi2013, ArBr2002}.

One of the main goals of this paper is to provide a geometric characterization of the discrepancy between DtO versus OtD methods, both for semi-discretization and time integration. 

\subsection{Main Results}
In this paper, we perform a systematic study of adjoint systems associated with evolution equations on infinite dimensional Banach spaces at the fully continuous, semi-discrete, and fully discrete levels, and of the relations amongst these levels. Vital to our analysis will be the construction of the infinite-dimensional Hamiltonian structure associated with adjoint systems for evolution equations, which will pave the way to a geometric characterization of the relations between these levels.

In Section \ref{section:adjoint-systems-evolution-section}, we investigate the fully continuous adjoint system associated with semilinear evolution equations. First, in Section \ref{Semilinear Evolution Equation Section}, we recall some basic facts about semilinear evolution equations, their semigroups and associated adjoint semigroups, and in Section \ref{section:inf-dim-hamiltonian-systems} we review infinite-dimensional Hamiltonian systems. In Section \ref{section:adjoint-systems-evolution-pde}, we define adjoint systems associated with evolution equations and equip such systems with an infinite-dimensional Hamiltonian structure. We discuss existence and uniqueness of solutions for these systems with Type II boundary conditions, specifying initial conditions for the state variable and terminal conditions for the adjoint variable and we further show that the adjoint system obeys an adjoint-variational quadratic conservation law given in Proposition \ref{prop:adjoint-cons-semilinear}, which is fundamental to adjoint sensitivity analysis. 

In Section \ref{section:discretization}, we explore the discretization of adjoint systems for evolutionary PDEs, both at the semi-discrete and fully discrete levels. In Section \ref{section:semi-discretization}, we study \textit{method-of-lines} semi-discretizations of adjoint systems. In particular, we show that associated with a method-of-lines semi-discretization of the evolution equation, there is an associated dual semi-discretization of the adjoint system such that semi-discretization and adjoining commute (Theorems \ref{thm:semi-disc-galerkin-comm} and \ref{thm:method-of-lines-adjoint-comm}). We show how this implies that in order for semi-discretize-then-optimize and optimize-then-semi-discretize to commute, the optimize-then-semi-discretize method must preserve the Hamiltonian structure. We further characterize the associated dual semi-discretization as the unique semi-discretization of the adjoint system which satisfies a semi-discrete analogue of the adjoint-variational quadratic conservation law in Theorem \ref{thm:method-of-lines-adjoint-comm}.

In Section \ref{section:time-integration}, we turn our attention to time integration of adjoint systems associated with ODEs via one-step methods. In particular, we show that time integration via one-step methods and adjoining commute precisely when the one-step method for the adjoint system is the cotangent lift of the one-step method for the state dynamics (Theorem \ref{thm:time-int-adjoint-comm}). This generalizes the result of \cite{Sa2016}, where it is shown that time integration via partitioned Runge--Kutta methods and adjoining commute if and only if the method is symplectic; namely, we generalize this result to the class of all one-step methods. In Theorem \ref{thm:time-int-adjoint-comm}, we additionally characterize the cotangent lift of a one-step method as the unique one-step method which covers the one-step method for the state variable and satisfies the discrete adjoint-variational quadratic conservation law. This addresses the question of DtO versus OtD for time integration via one-step methods: an OtD scheme which does not use the cotangent lifted method cannot satisfy the adjoint quadratic conservation law and hence, can lead to the observed defect in the exact discrete gradient of such methods as discussed in Section \ref{section:DtO-vs-OtD}. We furthermore show in Proposition \ref{prop:cotangent-order} that the cotangent lifted method is adjoint consistent provided that the underlying one-step method is \textit{variationally equivariant}, which informally is the property that taking variations and discretizing by the one-step method commute.

Finally, in Section \ref{section:naturality}, we combine the previous results for semi-discretization and time integration to discuss the natural relations between the fully continuous, semi-discrete, and fully discrete levels of the adjoint system associated with evolution equations. 

\section{Adjoint Systems for Evolutionary PDEs}\label{section:adjoint-systems-evolution-section}
In this section, we investigate adjoint systems associated with semilinear evolution equations and in particular, utilize techniques from infinite-dimensional symplectic geometry \cite{ChMa1974} to equip such systems with a symplectic structure. This structure will be useful for our investigation of semi-discretization and time integration of these systems. It should be noted that much of our discussion can be applied, at least formally, to more general nonlinear evolution equations, although analytic issues such as existence and uniqueness would be problem dependent. Throughout the paper, we will assume that $X$ is a real reflexive Banach space, $\langle\cdot,\cdot\rangle$ will denote the duality pairing between $X^*$ and $X$, and $B^*$ will denote the adjoint of an operator $B$ with respect to this duality pairing. Note that the assumption that $X$ is reflexive is crucial, since we require that the adjoint semigroup of a strongly continuous semigroup is also strongly continuous; this is necessary for the existence and uniqueness of the Hamiltonian vector field of an adjoint system in the infinite-dimensional setting. 

We start by recalling some basic facts about adjoint systems in the finite-dimensional (ODE) setting.

\subsection{Geometry of Finite-Dimensional Adjoint Systems}\label{section:geometry-adjoints-review}
In \cite{TrLe2024}, we provide a systematic study of the geometry of adjoint systems associated with ODEs and DAEs, utilizing tools from symplectic and presymplectic geometry, respectively. This paper extends the results of \cite{TrLe2024} to the infinite-dimensional setting, where we will consider adjoint systems associated with evolution equations on a Banach space. As a primer, it will be useful to recall some facts on the geometry of adjoint systems in the finite-dimensional setting (for more details, see \cite{TrLe2024}).

Let $M$ be a finite-dimensional manifold and consider the ODE on $M$ given by
\begin{equation}\label{ODEmanifold}
\dot{q} = f(q),
\end{equation} 
where $f$ is a vector field on $M$. Letting $\pi: TM \rightarrow M$ denote the tangent bundle projection, we recall that a vector field $f$ is a map $f: M \rightarrow TM$ which satisfies $\pi \circ f = \mathbf{1}_M$, i.e., $f$ is a section of the tangent bundle.

Consider the Hamiltonian $H: T^*M \rightarrow \mathbb{R}$ given by $H(q,p) = \langle p, f(q) \rangle_q$ where $\langle\cdot,\cdot\rangle_q$ is the duality pairing of $T^*_qM$ with $T_qM$. Recall that the cotangent bundle $T^*M$ possesses a canonical symplectic form $\Omega = -d\Theta$ where $\Theta$ is the tautological one-form on $T^*M$. With coordinates $(q,p) = (q^A, p_A)$ on $T^*M$, this symplectic form has the coordinate expression $\Omega = dq\wedge dp \equiv dq^A \wedge dp_A$. 

We define the adjoint system as the ODE on $T^*M$ given by Hamilton's equations, with the above choice of Hamiltonian $H$ and the canonical symplectic form. Thus, the adjoint system is given by the equation
$$ i_{X_H}\Omega = dH, $$
whose solution curves on $T^*M$ are the integral curves of the Hamiltonian vector field $X_H$. As is well-known, for the particular choice of Hamiltonian $H(q,p) = \langle p, f(q)\rangle_q$, the Hamiltonian vector field $X_H$ is given by the cotangent lift $\widehat{f}$ of $f$, which is a vector field on $T^*M$ that covers $f$ (for a discussion of the geometry of the cotangent bundle and lifts, see \cite{YaIs1973, MaRa1999}). This is stated in the following proposition.
\begin{prop}
    For the Hamiltonian $H(q,p) = \langle p, f(q)\rangle_q$, the corresponding Hamiltonian vector field $X_H$ is equal to the cotangent lift $\widehat{f}$ of $f$.
\end{prop}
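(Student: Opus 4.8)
The plan is to verify the identity in canonical Darboux coordinates $(q^A, p_A)$ on $T^*M$, where all the relevant objects have explicit expressions, and then note that the resulting vector field is coordinate-independent because it is defined by the intrinsic equation $i_{X_H}\Omega = dH$. First I would write out the Hamiltonian: in coordinates, $H(q,p) = \langle p, f(q)\rangle_q = p_A f^A(q)$, so that $dH = f^A(q)\, dp_A + p_A \frac{\partial f^A}{\partial q^B}(q)\, dq^B$. Next, writing a general vector field on $T^*M$ as $X = a^B \frac{\partial}{\partial q^B} + b_B \frac{\partial}{\partial p_B}$ and contracting with $\Omega = dq^A \wedge dp_A$, one gets $i_X \Omega = a^A\, dp_A - b_A\, dq^A$. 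Matching this against $dH$ component by component yields $a^A = f^A(q)$ and $b_A = -p_B \frac{\partial f^B}{\partial q^A}(q)$, i.e.
\begin{equation}
X_H = f^A(q)\,\frac{\partial}{\partial q^A} - p_B\,\frac{\partial f^B}{\partial q^A}(q)\,\frac{\partial}{\partial p_A}.
\end{equation}

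It then remains to recall the coordinate expression for the cotangent lift $\widehat{f}$ of the vector field $f$ and observe it coincides with the above. The cotangent lift is characterized (see \cite{YaIs1973, MaRa1999}) as the vector field on $T^*M$ whose flow is the cotangent lift of the flow of $f$; equivalently, $\widehat{f}$ is the unique vector field on $T^*M$ that covers $f$ and whose flow preserves the tautological one-form $\Theta$. In Darboux coordinates this flow acts on the fiber by the inverse-transpose of the Jacobian of the base flow, and differentiating at the identity produces exactly $\widehat{f} = f^A \frac{\partial}{\partial q^A} - p_B \frac{\partial f^B}{\partial q^A} \frac{\partial}{\partial p_A}$. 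Comparing the two expressions gives $X_H = \widehat{f}$. Finally, since both $X_H$ (defined by $i_{X_H}\Omega = dH$ with the intrinsic canonical symplectic form) and $\widehat{f}$ (defined via the cotangent-lifted flow) are manifestly coordinate-free objects, the equality established in any one Darboux chart holds globally on $T^*M$.

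The only mild subtlety, rather than a genuine obstacle, is bookkeeping: one must be careful with signs in the conventions $\Omega = -d\Theta$ and $\Theta = p_A\, dq^A$, and with the distinction between the two natural sign conventions for the cotangent lift (some authors lift $f$ so that $\widehat f$ preserves $\Theta$, others so that its flow is the inverse cotangent lift). With the conventions fixed as in the excerpt, the matching is immediate. Alternatively, and perhaps more cleanly, one could avoid coordinates entirely by invoking the standard fact that for any vector field $f$ on $M$ the cotangent lift $\widehat f$ satisfies $i_{\widehat f}\Theta = H$ (the "momentum" of $f$) together with $\mathcal{L}_{\widehat f}\Theta = 0$; then Cartan's formula gives $i_{\widehat f}\Omega = -i_{\widehat f} d\Theta = d(i_{\widehat f}\Theta) - \mathcal{L}_{\widehat f}\Theta = dH$, so $\widehat f$ solves Hamilton's equation for $H$, and uniqueness of the Hamiltonian vector field (nondegeneracy of $\Omega$) forces $X_H = \widehat f$. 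I would present the coordinate computation as the main argument and mention the intrinsic argument as a remark.
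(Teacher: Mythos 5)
Your proposal is correct, but your main argument takes a different route from the paper's. You verify the identity by brute force in Darboux coordinates: compute $dH = f^A\,dp_A + p_A\,\partial_{q^B}f^A\,dq^B$, contract a general vector field with $\Omega = dq^A\wedge dp_A$, match components to get $X_H = f^A\,\partial_{q^A} - p_B\,\partial_{q^A}f^B\,\partial_{p_A}$, and then check that differentiating the cotangent-lifted flow (inverse-transpose Jacobian action on the fiber) gives the same expression for $\widehat f$. The paper instead argues intrinsically: since cotangent-lifted flows preserve the tautological one-form, $\mathcal{L}_{\widehat f}\Theta = 0$, and since $i_{\widehat f}\Theta = H$, Cartan's formula gives $i_{\widehat f}\Omega = -i_{\widehat f}d\Theta = d(i_{\widehat f}\Theta) = dH$, so nondegeneracy of $\Omega$ forces $X_H = \widehat f$ — which is precisely the ``alternative'' you relegate to a closing remark. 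The coordinate route is more elementary and makes the adjoint equation $\dot p = -[Df(q)]^*p$ visible immediately, at the cost of having to separately justify the coordinate formula for $\widehat f$ and to argue coordinate-independence; the intrinsic route is shorter, global by construction, and is the one that generalizes cleanly to the infinite-dimensional setting the paper is heading toward. Your sign bookkeeping ($\Omega=-d\Theta$, $\Theta=p_A\,dq^A$, lifting by $T^*\Phi_{-\epsilon}$ so that the lift covers $\Phi_\epsilon$) is consistent with the paper's conventions, so both of your arguments go through.
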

\begin{proof}
    To be more explicit, recall that the cotangent lift of $f$ is constructed as follows. Let $\Phi_{\epsilon}: M \rightarrow M$ denote the time-$\epsilon$ flow map of $f$, which generates a one-parameter family of diffeomorphisms. Then, we consider the cotangent lifted diffeomorphisms given by $T^*\Phi_{-\epsilon}: T^*M \rightarrow T^*M$. This covers $\Phi_{\epsilon}$ in the sense that $\pi_{T^*M} \circ T^*\Phi_{-\epsilon} = \Phi_{\epsilon} \circ \pi_{T^*M} $ where $\pi_{T^*M}: T^*M \rightarrow M$ is the cotangent bundle projection. The cotangent lift $\widehat{f}$ is then defined to be the infinitesimal generator of the cotangent lifted flow,
$$ \widehat{f}(z) = \frac{d}{d\epsilon}\Big|_{\epsilon=0} T^*\Phi_{-\epsilon} (z). $$
We can directly verify that $\widehat{f}$ is the Hamiltonian vector field for $H$, which follows from
$$ i_{\widehat{f}}\Omega = -i_{\widehat{f}}d\Theta = -\mathcal{L}_{\widehat{f}}\Theta + d( i_{\widehat{f}}\Theta ) = d( i_{\widehat{f}}\Theta) = dH, $$
where $\mathcal{L}_{\hat{f}}\Theta = 0$ follows from the fact that cotangent lifted flows preserve the tautological one-form and $H = i_{\widehat{f}}\Theta$ follows from a direct computation, where $i_{\widehat{f}}\Theta$ is interpreted as a function on the cotangent bundle which maps $(q,p)$ to $\langle \Theta(q,p), \widehat{f}(q,p)\rangle_q$.
\end{proof}

With coordinates $z = (q,p)$ on $T^*M$, the adjoint system is the ODE on $T^*M$ given by
\begin{equation}\label{AdjointODEmanifold}
\dot{z} = \widehat{f}(z). 
\end{equation}

The adjoint system \eqref{AdjointODEmanifold} covers \eqref{ODEmanifold} in the following sense.
\begin{prop}[Proposition 2.2 of \cite{TrLe2024}] \label{LiftIntegralCurveProp}
Integral curves to the adjoint system \eqref{AdjointODEmanifold} lift integral curves to the system \eqref{ODEmanifold}.
\end{prop}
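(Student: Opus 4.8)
The plan is to exploit the defining feature of the cotangent lift: by construction, $\widehat{f}$ is a vector field on $T^*M$ that \emph{covers} $f$. Once this covering property is recorded at the infinitesimal level, the statement follows by applying the tangent of the cotangent bundle projection to the ODE defining the adjoint system.

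First I would establish the infinitesimal covering identity $T\pi_{T^*M} \circ \widehat{f} = f \circ \pi_{T^*M}$. This is obtained by differentiating the relation $\pi_{T^*M} \circ T^*\Phi_{-\epsilon} = \Phi_{\epsilon} \circ \pi_{T^*M}$ (the covering property of the cotangent lifted flow, already noted in the previous proof) with respect to $\epsilon$ at $\epsilon = 0$: the left-hand side yields $T\pi_{T^*M}\big(\widehat{f}(z)\big)$ by the definition of $\widehat{f}$ as the infinitesimal generator of $T^*\Phi_{-\epsilon}$, while the right-hand side yields $f\big(\pi_{T^*M}(z)\big)$ since $\Phi_\epsilon$ is the flow of $f$. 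The only point requiring mild care is that differentiation in $\epsilon$ commutes with $T\pi_{T^*M}$, which is immediate from smoothness of all maps involved.

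Next, let $z(\cdot)$ be an integral curve of the adjoint system, so $\dot{z}(t) = \widehat{f}(z(t))$, and set $q(t) := \pi_{T^*M}(z(t))$. Applying $T\pi_{T^*M}$ to both sides and using the chain rule together with the infinitesimal covering identity gives
$$\dot{q}(t) = T\pi_{T^*M}\big(\dot{z}(t)\big) = T\pi_{T^*M}\big(\widehat{f}(z(t))\big) = f\big(\pi_{T^*M}(z(t))\big) = f(q(t)),$$
so $q(\cdot)$ is an integral curve of \eqref{ODEmanifold}, which is the claim. Equivalently, in canonical coordinates $z = (q,p)$ one reads off from Hamilton's equations for $H(q,p) = \langle p, f(q)\rangle_q$ that the $q$-component decouples as $\dot{q} = f(q)$ (while the $p$-component evolves by $\dot{p} = -\big(Df(q)\big)^{*}p$), exhibiting the same conclusion explicitly.

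I do not expect a substantial obstacle: the entire content sits in the covering property of cotangent lifted flows, which is standard (see \cite{YaIs1973, MaRa1999}) and was already invoked above. The proposition is essentially a bookkeeping consequence of that property, and the coordinate computation serves only as a sanity check rather than as the main argument.
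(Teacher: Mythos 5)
Your argument is correct and is essentially the intended one: the paper itself only cites this as Proposition 2.2 of \cite{TrLe2024}, but the covering relation $\pi_{T^*M}\circ T^*\Phi_{-\epsilon}=\Phi_{\epsilon}\circ\pi_{T^*M}$ recorded in the preceding proof, differentiated at $\epsilon=0$ and combined with the chain rule exactly as you do, is the standard route, and it matches the coordinate form displayed right after the proposition, where the $q$-component decouples as $\dot{q}=f(q)$ while $\dot{p}=-[Df(q)]^{*}p$. No gaps to report.
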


In coordinates, the adjoint system has the form
\begin{align*}
    \dot{q} &= f(q), \\
    \dot{p} &= -[Df(q)]^*p.
\end{align*}
We will often refer to the variable $q$ as the state variable and $\dot{q} = f(q)$ as the state dynamics; we refer to $p$ as the adjoint or costate variable and $\dot{p} = -[Df(q)]^*p$ as the adjoint equation. We refer to both equations together as the adjoint system.

The adjoint system possesses a quadratic invariant associated with the variational equations of \eqref{ODEmanifold}. The variational equation is given by considering the tangent lifted vector field on $TM$, $\widetilde{f}: TM \rightarrow TTM$, which is defined in terms of the flow $\Phi_{\epsilon}$ generated by $f$ by
$$ \widetilde{f}(q,\delta q) = \frac{d}{d\epsilon}\Big|_{\epsilon = 0} T\Phi_{\epsilon} (q,\delta q), $$
where $(q,\delta q)$ are coordinates on $TM$. That is, $\widetilde{f}$ is the infinitesimal generator of the tangent lifted flow. The variational equation associated with \eqref{ODEmanifold} is the ODE associated with the tangent lifted vector field. In coordinates, 
\begin{equation}\label{VariationalODEmanifold}
\frac{d}{dt}(q,\delta q) = \widetilde{f}(q,\delta q).
\end{equation}
\begin{prop}[Proposition 2.3 of \cite{TrLe2024}] \label{prop:ManifoldQuadraticInvariant}
For a curve $(q,\delta q,p)$ on $TM \oplus_M T^*M$ satisfying \eqref{AdjointODEmanifold} and  \eqref{VariationalODEmanifold},
\begin{equation}\label{ManifoldVariationalQuadratic}
\frac{d}{dt} \Big\langle p(t), \delta q(t)\Big\rangle_{q(t)} = 0. \end{equation}
\end{prop}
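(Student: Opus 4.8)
The plan is to prove \eqref{ManifoldVariationalQuadratic} first by a direct computation in coordinates, and then to record the coordinate-free reason, which also clarifies the sign conventions. In the coordinates $(q,\delta q)$ on $TM$ the tangent lifted vector field $\widetilde f$ acts on the fiber by the Jacobian, so \eqref{VariationalODEmanifold} reads $\dot q = f(q)$, $\dot{\delta q} = Df(q)\,\delta q$; combining this with the adjoint equation $\dot p = -[Df(q)]^*p$ from \eqref{AdjointODEmanifold} and applying the product rule to the duality pairing gives
\[
\frac{d}{dt}\big\langle p,\delta q\big\rangle_q = \big\langle \dot p,\delta q\big\rangle_q + \big\langle p,\dot{\delta q}\big\rangle_q = -\big\langle [Df(q)]^*p,\,\delta q\big\rangle_q + \big\langle p,\, Df(q)\,\delta q\big\rangle_q = 0,
\]
where the last equality is the definition of the adjoint operator. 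At the coordinate level this is the entire argument; there is no obstacle beyond bookkeeping of signs.

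I would then record the intrinsic explanation, which makes the cancellation transparent and connects it to the lift constructions above. If $(q(t),\delta q(t),p(t))$ is an integral curve of both $\widetilde f$ and $\widehat f$, then since both \eqref{AdjointODEmanifold} and \eqref{VariationalODEmanifold} restrict to $\dot q = f(q)$, the $q$-component is common, and by the definitions of $\widetilde f$ and $\widehat f$ as infinitesimal generators of $T\Phi_\epsilon$ and $T^*\Phi_{-\epsilon}$ we have $(q(t),\delta q(t)) = T\Phi_t(q_0,\delta q_0)$ and $(q(t),p(t)) = T^*\Phi_{-t}(q_0,p_0)$, where $\Phi_t$ is the flow of $f$. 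The cotangent lift is by construction the inverse-dual of the tangent lift along the flow: for $v_{q_0}\in T_{q_0}M$ and $\alpha_{q_0}\in T^*_{q_0}M$,
\[
\big\langle T^*\Phi_{-t}(\alpha_{q_0}),\, T\Phi_t(v_{q_0})\big\rangle_{\Phi_t(q_0)} = \big\langle \alpha_{q_0},\, v_{q_0}\big\rangle_{q_0},
\]
so $t \mapsto \langle p(t),\delta q(t)\rangle_{q(t)}$ is literally constant, and \eqref{ManifoldVariationalQuadratic} follows by differentiating. (Equivalently, one can note that $\langle p,\delta q\rangle$ is the value of the tautological one-form $\Theta$ on $\widehat f$, extended along the variation, and use $\mathcal{L}_{\widehat f}\Theta = 0$; but the flow argument is the cleanest.)

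The only point that genuinely requires care — and the closest thing to an obstacle — is the bridge between the two presentations: one must be sure that $\widetilde f$ and $\widehat f$ are honestly the tangent and cotangent lifts of the \emph{same} vector field $f$, not merely vector fields covering $f$, and that their $q$-components therefore agree as flows. Both facts are already in hand from the definitions given above, so once they are invoked the proposition is immediate.
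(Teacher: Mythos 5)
Your direct product-rule computation is exactly the argument the paper relies on: the statement is quoted from \cite{TrLe2024}, and the paper proves its infinite-dimensional analogue (Proposition \ref{prop:adjoint-cons-semilinear}) by the same substitution of $\dot p = -[Df(q)]^*p$ and $\dot{\delta q} = Df(q)\,\delta q$ into $\frac{d}{dt}\langle p,\delta q\rangle$. Your supplementary coordinate-free argument, using $\langle T^*\Phi_{-t}\alpha,\,T\Phi_t v\rangle = \langle \alpha, v\rangle$ so that the pairing is constant along the tangent- and cotangent-lifted flows, is also correct and matches the paper's remark that the invariance is a consequence of the symplectic (cotangent-lift) structure of the adjoint flow.
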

This quadratic invariant is the key to adjoint sensitivity analysis \cite{Sa2016} and its invariance is a consequence of symplecticity of the Hamiltonian flow \cite{TrLe2024}. 

In moving to the infinite-dimensional setting, we will develop the infinite-dimensional Hamiltonian structure of adjoint systems associated with evolution equations. We will further investigate the reduction of this structure under semi-discretization and its preservation under geometric time integration. As a result, this will allow us to geometrically characterize the discrepancy between DtO and OtD methods. 

\subsection{Semilinear Evolutionary PDEs}\label{Semilinear Evolution Equation Section}
We recall some facts about abstract semilinear initial value problems of the following form. Let $A: \mathfrak{D}(A) \subset X \rightarrow X$ be a closed densely defined unbounded linear operator, with domain $\mathfrak{D}(A)$. Fix $t_f>0$ and let $f: [0,t_f] \times X \rightarrow X$. We then consider the following initial value problem problem
\begin{subequations}\label{Semilinear Evolution System}
\begin{align}\label{Semilinear Evolution Equation}
\dot{q}(t) &= Aq(t) + f(t,q(t)), \\
q(0) &= q_0. \label{Semilinear Evolution Equation IC}
\end{align}
\end{subequations}
Note that we allow $f$ to be a time-dependent and generally nonlinear operator; this includes inhomogeneous source terms of the form $f(t)$, as well as time-independent nonlinearities of the form $f(q)$.

We will assume throughout that $A$ generates a $C_0$-semigroup $\{e^{tA}\}_{t \geq 0}$. This holds, for example, if $A$ is a maximally contractive operator, by the Hille--Yosida theorem \cite{Pa1983}. The $C_0$-semigroup is characterized by the properties
\begin{align*}
e^{0A} &= I_X, \\
e^{sA} e^{tA} &= e^{(s+t)A}, \text{ for all } s,t \geq 0, \\
 \lim_{t \rightarrow 0} \| e^{tA}x - x \|_X &= 0, \text{ for all } x \in X.
\end{align*}
The first two properties are the statement that $\{e^{tA}\}_{t\geq 0}$ generates a semigroup on $X$ and the third property is the statement that the semigroup is strongly continuous ($C_0$). The generator $A$ is related to its semigroup by
$$ Ax = \lim_{t \rightarrow 0} \frac{e^{tA}x-x}{t}, $$
where the domain $\mathfrak{D}(A)$ is the subspace of $X$ where the above limit exists. 

Given $q_0 \in \mathfrak{D}(A)$, the curve $q(t) = e^{tA}q_0$ is the unique solution to the evolution equation \eqref{Semilinear Evolution System} with $f \equiv 0$. For the evolution equation \eqref{Semilinear Evolution System} with generally nonzero $f$, there are various assumptions which one can impose on $f$ to ensure that the evolution equation admits a solution, at least locally \cite{Pa1983, Ne1992}. For our purposes, we will assume that $f: [0,t_f] \times X \rightarrow X$ is Lipschitz continuous in both variables. Then, for $q_0 \in \mathfrak{D}(A)$, \eqref{Semilinear Evolution System} admits a unique solution which is characterized by Duhamel's principle~\cite{Pa1983}
\begin{equation}\label{eq:q-representation-formula}
    q(t) = e^{tA} q_0 + \int_{0}^{t} e^{(t-s)A}f(s,q(s)) ds.
\end{equation}
The solution is strong in the sense that $q \in C^1([0,t_f],X) \cap C^0([0,t_f],\mathfrak{D}(A))$.

We will be interested in a dual problem associated with the evolution equation \eqref{Semilinear Evolution Equation}. To investigate this dual problem, we will need the notion of the adjoint of the unbounded operator $A$. We define the associated adjoint operator $A^*: \mathfrak{D}(A^*)\subset X^* \rightarrow X^*$ as follows: the domain $\mathfrak{D}(A^*)$ is the set of all $x^* \in X^*$ for which there exists a $y^* \in X^*$ such that
$$ \langle y^*,x\rangle = \langle x^*, Ax\rangle, \text{ for all } x \in \mathfrak{D}(A). $$
Since $\mathfrak{D}(A)$ is dense, if such a $y^*$ exists, it is unique, and hence, we define $A^*x^* = y^*$. Note that since $A$ is by assumption closed and densely defined, $A^*$ is weak$^*$-densely defined and weak$^*$-closed \cite{Pa1983, Ne1992}. Since, by assumption, $X$ is reflexive, $A^*$ is weak-densely defined and weakly closed. Thus, $A^*$ generates a weakly continuous semigroup on $X^*$, $\{e^{tA^*}\}_{t \geq 0}$, which is in fact the adjoint of the $C_0$ semigroup $e^{tA}$, i.e., $e^{tA^*} = (e^{tA})^*$. By the weak semigroup theorem, $\{e^{tA^*}\}_{t \geq 0}$ is in fact strongly continuous, i.e., it is a $C_0$ semigroup on $X^*$ \cite{Pa1983, Ne1992}.

\subsection{Infinite-dimensional Hamiltonian Systems}\label{section:inf-dim-hamiltonian-systems}
Consider the cotangent bundle of a real reflexive Banach space $X$, $\pi_{T^*X}: T^*X \rightarrow X$, where, using the natural identification $T^*X \cong X \times X^*$, the bundle projection is given by $\pi_{T^*X}(q,p) = q$. Define the canonical one-form on $T^*X$ as follows: for $(q,p) \in T^*X$ and $(v,w) \in T_{(q,p)}(T^*X) \cong X \times X^*$,
$$ \Theta(q,p) \cdot (v,w) := \langle p, T\pi_{T^*X} (v,w)\rangle = \langle p,v\rangle. $$
The canonical symplectic form on $T^*X$ is defined as $\Omega = -d\Theta$; for $(v_1,w_1),(v_2,w_2) \in T_{(q,p)}(T^*X)$, we have the expression
$$ \Omega(q,p)\cdot ((v_1,w_1),(v_2,w_2)) = \langle w_2,v_1\rangle - \langle w_1,v_2\rangle. $$

Let $\mathfrak{D}^1$ be a dense subspace of $X$ which is additionally a Banach space with respect to a norm $\|\cdot\|_{\mathfrak{D}^1}$ and let $\mathfrak{D}^2$ be a dense subspace of $X^*$ which is additionally a Banach space with respect to a norm $\|\cdot\|_{\mathfrak{D}^2}$. A Hamiltonian is a map $H: \mathfrak{D}^1 \times \mathfrak{D}^2 \rightarrow \mathbb{R}$ which is Fr\'{e}chet differentiable in both arguments. Then, a Hamiltonian system is specified by a triple $(T^*X, \Omega, H)$ and associated with a Hamiltonian system are Hamilton's equations
$$ i_{X_H}\Omega = dH. $$
Since $X$ is reflexive, $\Omega$ is strongly non-degenerate and hence, the Hamiltonian vector field $X_H: \mathfrak{D}^1 \times \mathfrak{D}^2 \rightarrow TT^*X$ exists and is uniquely defined (for a detailed discussion of infinite-dimensional Hamiltonian systems, see \cite{ChMa1974}). The Hamiltonian vector field has the expression
\begin{align*}
X_H: \mathfrak{D}^1 \times \mathfrak{D}^2 &\rightarrow TT^*X, \\
     (q,p) &\mapsto (D_pH(q,p), -D_qH(q,p)) \in T_{(q,p)}T^*X \cong X \times X^*.
\end{align*}

We say that a curve $(q,p)$ on $T^*X$ is a solution of Hamilton's equations if it is an integral curve of $X_H$. Equivalently, such a solution satisfies 
\begin{align*}
\dot{q} &= D_pH(q,p), \\
\dot{p} &= -D_qH(q,p).
\end{align*}
$X_H$ exists and is uniquely defined, but this does not necessarily imply the existence or uniqueness of its corresponding integral curves; so one has to proceed with caution. Such existence and uniqueness will depend on the properties of $X_H$ and thus $H$; for example, if $X_H$ is Lipschitz, there exists a unique solution \cite{FuncAnalysis}. Below, we investigate solutions for our particular interest of adjoint systems.

\textbf{Time-Dependent Hamiltonian Systems}
To treat time-dependent evolutionary PDEs, we extend the definition of a Hamiltonian system to the case where the Hamiltonian is time-dependent, i.e., $H: [0,t_f] \times \mathfrak{D}^1 \times \mathfrak{D}^2 \rightarrow \mathbb{R}$. Let $\tau_t: [0,t_f] \times X \times X^* \rightarrow X \times X^*$ denote the canonical projection onto the second and third factors; we identify $\Theta$ and $\Omega$ with their pullbacks by $\tau_t$. Then, in the time-dependent setting, Hamilton's equations read
$$ i_{X_H}(\Omega - dH \wedge dt) = 0, $$
where the time-dependent Hamiltonian vector field has the expression 
\begin{align*}
X_H: (0,t_f) \times \mathfrak{D}^1 \times \mathfrak{D}^2 &\rightarrow T((0,t_f)) \oplus T(T^*X), \\
     (t,q,p) &\mapsto \frac{\partial}{\partial t} + (D_pH(t,q,p), -D_qH(t,q,p)).
\end{align*}

In this setting, we say that a curve on $[0,t_f] \times T^*X$ is a solution of Hamilton's equations if it is an integral curve of $X_H$ covering the identity on $[0,t_f]$. Equivalently, such a solution curve is of the form $t \mapsto (t, q(t), p(t))$, where
\begin{subequations}\label{eq:inf-dim-Hamilton-equation}
\begin{align}
\dot{q}(t) &= D_pH(t,q,p), \label{eq:inf-dim-Hamilton-equation-a} \\
\dot{p}(t) &= -D_qH(t,q,p). \label{eq:inf-dim-Hamilton-equation-b}
\end{align}
\end{subequations}

\subsection{Adjoint Systems for Evolutionary PDEs}\label{section:adjoint-systems-evolution-pde}
In this section, we consider a particular class of infinite-dimensional Hamiltonian systems. Namely, we will explore the adjoint system associated with the evolutionary PDE \eqref{Semilinear Evolution Equation}. Let $\mathfrak{D}^1 = \mathfrak{D}(A) \subset X$ and $\mathfrak{D}^2 = \mathfrak{D}(A^*) \subset X^*$. 

Define the \textit{adjoint Hamiltonian} associated with the evolutionary PDE \eqref{Semilinear Evolution Equation} by
\begin{subequations}\label{eq:adjoint-Hamiltonian}
    \begin{align}
        &H: [0,t_f] \times \mathfrak{D}(A) \times \mathfrak{D}(A^*) \rightarrow \mathbb{R}, \label{eq:adjoint-Hamiltonian-a}\\
        &H(t,q,p) = \langle p, Aq + f(t,q) \rangle \label{eq:adjoint-Hamiltonian-b}
    \end{align}
\end{subequations}
We will assume that $f: [0,t_f] \times X \rightarrow X$ is Lipschitz in both arguments. Furthermore, we assume that $f$ is differentiable in its second argument and that 
$$ \sup_{q \in X: \|q\|_X \leq R} \|D_qf(t,q)\|_{X^*} < \infty, $$
uniformly in $t \in [0,t_f]$, for any $R>0$.
Compute
\begin{align*}
D_qH(t,q,p) &= A^*p + [D_qf(t,q)]^*p, \\
D_pH(t,q,p) &= Aq + f(t,q).
\end{align*}
Thus, Hamilton's equations \eqref{eq:inf-dim-Hamilton-equation} for the adjoint Hamiltonian are
\begin{subequations}\label{eq:adjoint-system-semilinear}
\begin{align}
\dot{q}(t) &= Aq(t) + f(t,q(t)) \label{eq:adjoint-system-semilinear-a}, \\
\dot{p}(t) &= -A^*p(t) - [D_qf(t,q(t))]^*p(t). \label{eq:adjoint-system-semilinear-b}
\end{align}
\end{subequations}
We refer to equations \eqref{eq:adjoint-system-semilinear} as the adjoint system associated with the evolutionary PDE \eqref{Semilinear Evolution Equation}. To solve the adjoint system, we have to specify appropriate boundary conditions for the problem. We will consider Type II boundary conditions $q(0) = q_0 \in \mathfrak{D}(A)$, $p(t_f) = p_f \in \mathfrak{D}(A^*)$, which specifies an initial condition for the state variable $q$ and a terminal condition for the adjoint variable $p$. As motivation for these boundary conditions, let us consider as an example the linear case, $f \equiv 0$.

\begin{example}[Linear Case]\label{ex:linear-adjoint-system}
    In the linear case $f \equiv 0$, the semilinear evolution PDE is simply $\dot{q} = Aq$; the associated adjoint system is 
    \begin{align*}
        \dot{q} &= Aq, \\
        \dot{p} &= -A^*p.
    \end{align*}
    We would like to use the semigroups $\{e^{tA}\}_{t \geq 0}$ and $\{e^{tA^*}\}_{t \geq 0}$ generated by $A$ and $A^*$, respectively, to construct solutions of the adjoint system. First note that we cannot, in general, use the semigroups to solve the adjoint system as an initial value problem, $(q(0),p(0)) = (q_0,p_0) \in \mathfrak{D}(A) \times \mathfrak{D}(A^*)$, due to the minus sign appearing in the second equation, i.e., the relevant operator to consider is $-A^*$, which may not generate a semigroup, without further assumptions \cite{Ne1992}. However, $-A^*$ does generate a semigroup in reverse time, $\{e^{(t_f-t)A^*}\}_{t \leq t_f}$. To see this, define a reverse time parameter $s := t_f-t$. Then, the equation $\dot{p} = -A^*p$ with condition $p(t_f) = p_f$ can be equivalently expressed as
\begin{align*}
\frac{d}{ds} p &= A^*p, \\
p\Big|_{s=0} &= p_f.
\end{align*}
Thus, this equation can be solved via the semigroup $\{e^{sA^*}\}_{s \geq 0}$ in the reverse time variable $s$, which is equivalent to using the semigroup $\{e^{(t_f-t)A^*}\}_{t \leq t_f}$ in the standard time variable $t$.

We can then solve the adjoint system if we place Type II boundary conditions $(q(0), p(t_f)) = (q_0, p_f) \in \mathfrak{D}(A) \times \mathfrak{D}(A^*)$ with $t_f > 0$. Thus, we can define a solution of Hamilton's equations with Type II boundary conditions by
\begin{align*}
(q,p): [0,t_f] &\rightarrow X \times X^*, \\
       (q(t), p(t)) &= (e^{tA}q_0, e^{(t_f-t)A^*}p_f).
\end{align*}
By the properties of the semigroups discussed previously, we have that this curve is a solution of Hamilton's equations with Type II boundary conditions, i.e.,
\begin{align*}
\dot{q}(t) &= Aq \text{ for all } t \in (0,t_f), \\
\dot{p}(t) &= -A^*p \text{ for all } t \in (0,t_f), \\
q(0) &= q_0 \in \mathfrak{D}(A), \\
p(t_f) &= p_f \in \mathfrak{D}(A^*),
\end{align*}
where the last two equations are interpreted in the strong $C_0$ sense as $t \rightarrow 0$ and $t \rightarrow t_f$, respectively. These are the natural boundary conditions to consider for adjoint sensitivity analysis, where the adjoint equation for $p$ is interpreted as evolving backward or ``backpropagating" in time.
\end{example}

Returning to the general case, we consider the adjoint system \eqref{eq:adjoint-system-semilinear} with boundary conditions $q(0) = q_0 \in \mathfrak{D}(A)$, $p(t_f) = p_f \in \mathfrak{D}(A^*)$. Note that the first equation \eqref{eq:adjoint-system-semilinear-a} admits a unique solution $q \in C^1([0,t_f],X) \cap C^0([0,t_f], \mathfrak{D}(A^*))$ with $q(0)=q_0$, as discussed in \eqref{Semilinear Evolution Equation Section}. In particular, $\sup_{t \in [0,t_f]}\|q(t)\|_X < \infty$. Thus, with this solution curve $q(t)$ fixed, we have that the map $(t,p) \mapsto -[D_qf(t,q(t))]^*p$
is Lipschitz in both arguments. Hence, by the theory discussed in Section \ref{Semilinear Evolution Equation Section}, there exists a unique solution $p \in C^1([0,t_f],X^*) \cap C^0([0,t_f], \mathfrak{D}(A^*))$ to the second Hamilton's equation above with $p(t_f)=p_f$.

Now, we show that the adjoint system admits an \textit{adjoint-variational} quadratic invariant. To do this, we define the variational equation as the linearization of $\dot{q}(t) = Aq(t) + f(t,q(t))$ about the solution curve $q(t)$, i.e.,
\begin{align*}
\dot{\delta}q(t) &= A\delta q(t) + D_qf(t,q(t)) \delta q(t), \\
\delta q(0) &= \delta q_0 \in \mathfrak{D}(A).
\end{align*}
Again, by the assumptions on $f$, there exists a unique solution $\delta q \in C^1([0,t_f],X) \cap C^0([0,t_f], \mathfrak{D}(A))$ to the above variational equation. We can now state an adjoint sensitivity analysis result.
\begin{prop}\label{prop:adjoint-cons-semilinear}
Let the solution curves $q$, $p$, and $\delta q$ be as above. Then, for any $t \in (0,t_f)$,
\begin{equation}\label{eq:adjoint-cons-semilinear}
    \frac{d}{dt} \langle p(t), \delta q(t)\rangle = 0.
\end{equation}
\begin{proof}
Since $p \in C^1([0,t_f],X^*) \cap C^0([0,t_f], \mathfrak{D}(A^*))$ and $\delta q \in C^1([0,t_f],X) \cap C^0([0,t_f], \mathfrak{D}(A))$, we can directly compute
\begin{align*}
\frac{d}{dt} \langle p(t), \delta q(t)\rangle &= \langle \dot{p}(t), \delta q(t)\rangle + \langle p(t), \dot{\delta}q(t)\rangle \\
&= \langle -A^*p(t) - [D_qf(t,q(t))]^*p(t), \delta q(t)\rangle + \langle p(t), A\delta q(t) + D_qf(t,q(t))\delta q(t) \rangle = 0.
\end{align*}
\end{proof}
\end{prop}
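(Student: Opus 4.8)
The plan is to proceed by direct computation, mirroring the finite-dimensional argument behind Proposition \ref{prop:ManifoldQuadraticInvariant}, where conservation of $\langle p,\delta q\rangle$ is a manifestation of the symplecticity of the Hamiltonian flow. First I would record the regularity that makes the computation legitimate: by construction $p \in C^1([0,t_f],X^*) \cap C^0([0,t_f],\mathfrak{D}(A^*))$ and $\delta q \in C^1([0,t_f],X) \cap C^0([0,t_f],\mathfrak{D}(A))$, so that for each fixed $t$ the pair $(p(t),\delta q(t))$ lies in $\mathfrak{D}(A^*) \times \mathfrak{D}(A)$, and both curves are classically differentiable in the norm topologies of $X^*$ and $X$, respectively.

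Next I would apply the Leibniz rule for the duality pairing $\langle\cdot,\cdot\rangle : X^* \times X \to \mathbb{R}$, which is a bounded bilinear map, to obtain
\[ \frac{d}{dt}\langle p(t),\delta q(t)\rangle = \langle \dot p(t),\delta q(t)\rangle + \langle p(t),\dot{\delta}q(t)\rangle. \]
Before substituting the evolution equations I would check that each term on the right-hand side is individually well-defined: $\dot p(t) = -A^*p(t) - [D_qf(t,q(t))]^*p(t)$ lies in $X^*$ because $p(t) \in \mathfrak{D}(A^*)$ and, by the uniform boundedness assumption on $D_qf$, the operator $D_qf(t,q(t))$ is bounded so its adjoint $[D_qf(t,q(t))]^*$ is an everywhere-defined bounded operator on $X^*$; likewise $\dot{\delta}q(t) = A\delta q(t) + D_qf(t,q(t))\delta q(t) \in X$ since $\delta q(t) \in \mathfrak{D}(A)$.

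Then I would substitute and regroup, pairing the unbounded part against an element of the appropriate domain:
\[ \frac{d}{dt}\langle p,\delta q\rangle = -\langle A^*p,\delta q\rangle - \langle [D_qf]^*p,\delta q\rangle + \langle p, A\delta q\rangle + \langle p, D_qf\,\delta q\rangle. \]
The identity $\langle A^*p(t),\delta q(t)\rangle = \langle p(t), A\delta q(t)\rangle$ holds because $\delta q(t) \in \mathfrak{D}(A)$ and $p(t) \in \mathfrak{D}(A^*)$, which is precisely the defining property of the adjoint $A^*$; similarly $\langle [D_qf]^*p,\delta q\rangle = \langle p, D_qf\,\delta q\rangle$ by definition of the Banach-space adjoint of the bounded operator $D_qf(t,q(t))$. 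Hence all four terms cancel in pairs and the derivative vanishes on $(0,t_f)$.

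The computation itself is routine; the only real subtlety, and the step I would be most careful about, is the bookkeeping that lets one split $\dot p$ and $\dot{\delta}q$ into an ``unbounded'' summand landing in the domain pairing and a ``bounded'' summand, so that the adjoint relations apply termwise without ever pairing an element outside $\mathfrak{D}(A)$ or $\mathfrak{D}(A^*)$. This is exactly what the $C^0([0,t_f],\mathfrak{D}(A^*))$ and $C^0([0,t_f],\mathfrak{D}(A))$ regularity guarantees, and it is why the reflexivity of $X$ — which secured strong continuity of the adjoint semigroup and hence this regularity for $p$ — is needed upstream.
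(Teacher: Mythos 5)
Your proposal is correct and follows essentially the same route as the paper's proof: apply the Leibniz rule to the duality pairing using the stated $C^1$/$C^0$-in-domain regularity of $p$ and $\delta q$, substitute the adjoint and variational equations, and cancel the terms pairwise via the defining property of $A^*$ and the adjoint of the bounded operator $D_qf(t,q(t))$. The additional domain and boundedness bookkeeping you spell out is exactly what the paper's terse computation implicitly relies on, so no further changes are needed.
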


\begin{remark}
    The quadratic invariant in Proposition \ref{prop:adjoint-cons-semilinear} is fundamental to adjoint sensitivity analysis for evolutionary PDEs. Although the main purpose of this paper is to investigate the structure of adjoint systems and their discretization, we will briefly outline the concept of adjoint sensitivity analysis here. 

    Formally, for a minimization problem
    $$ \min_{q_0 \in X} \text{C}\,(q(t_f)) \text{ such that } \dot{q}(t) = Aq(t) + f(t,q(t)),\ \dot{q}(0) = q_0, $$
    the gradient of the cost function with respect to $q_0$ in a direction $\delta q_0$ is given by $\langle D\text{C}\,(q(t_f)), \delta q(t_f)\rangle$, where $\delta q(t)$ solves the variational equation with $\delta q(0) = \delta q_0$. If we want to express this quantity in terms of the initial perturbation $\delta q_0$ instead of the perturbation propagated to the terminal time $\delta q(t_f)$, we can do this through the solution $p(t)$ of the adjoint equation by setting $p(t_f) = D\text{C}\,(q(t_f))$ and applying the previous proposition, which yields
    \begin{align*}
    \langle D\text{C}\,(q(t_f)), \delta q(t_f) \rangle = \langle p(t_f), \delta q(t_f)\rangle = \langle p(0), \delta q(0)\rangle.
    \end{align*}
    The gradient can then be computed by solving the variational equation forward in time or the adjoint equation backward in time. See for example \cite{Sa2016, Pr2000, GiSu2002, KuCr2016, SaKa2000, NoWa2007}.
\end{remark}

\section{Discretization of Adjoint Systems}\label{section:discretization}
In this section, we investigate the semi-discretization and time integration of the adjoint system associated with an evolutionary PDE. We show that associated with semi-discretization in space and time integration of an evolutionary PDE, there are naturally associated dual methods such that adjoining and discretization commute, and characterize these uniquely in terms of semi-discrete and fully discrete analogues of the adjoint-variational quadratic conservation law \eqref{eq:adjoint-cons-semilinear}.

\textbf{Notation.} For a linear operator $B$, we denote its adjoint with respect to a duality pairing $\langle\cdot,\cdot\rangle$ as $B^*$. As we will see, when we discuss discretization in the subsequent sections, several duality pairings arise. To avoid ambiguity, we introduce some extra notation. Let $V$ be a finite-dimensional vector space, $V^*$ its dual. For a duality pairing $\langle \cdot,\cdot\rangle_M: V^* \times V \rightarrow \mathbb{R}$, we will denote the adjoint of an operator $B$ with respect to this pairing as $B^{*M}$. When $V = \mathbb{R}^N$, we denote $\langle \cdot,\cdot\rangle_S$ as the standard duality pairing, and thus, the adjoint with respect to the standard duality pairing is just given by the transpose, $B^{*S} = B^T$. The use of several duality pairings will be fundamental in our work, as we will relate the adjoint systems formed from different pairings. The standard duality pairing and the mass matrix pairing are particularly common cases of duality pairings in the context of semi-discretization.

We will denote coordinates on a finite-dimensional vector space in bold, for example, $\mathbf{q} = (\mathbf{q}^i) \in V$ and $\mathbf{p} = (\mathbf{p}_i) \in V^*$, respectively. Consider two duality pairings $\langle \cdot, \cdot \rangle_{L}$ and $\langle \cdot,\cdot \rangle_{R}$ on $V^* \times V$, related by an invertible operator $P: V \rightarrow V$ such that
$$ \langle \mathbf{p}, \mathbf{v}\rangle_{L} = \langle \mathbf{p}, P \mathbf{v} \rangle_{R} \text{ for all } \mathbf{p} \in V^*, \mathbf{v} \in V.$$
Then, the adjoint of an operator $B$ with respect to  $\langle \cdot, \cdot \rangle_{L}$ is related to its adjoint with respect to $\langle \cdot, \cdot \rangle_{R}$ by
\begin{align}\label{eq:matrix-duality-similarity}
    B^{*L} = P^{-*R} B^{*R} P^{*R},
\end{align}
where $P^{-*R} := (P^{-1})^{*R} = (P^{*R})^{-1}$. Note in particular that $P^{*L} = P^{*R}$.

Now, consider an ODE on $V$ of the form $\dot{\mathbf{q}} = K\mathbf{q} + \mathbf{f}(t,\mathbf{q})$ where $K$ is a linear operator (of course, by letting $K=0$, this encompasses a fully nonlinear ODE as well), and let $\langle\cdot,\cdot\rangle_M$ be a duality pairing on $V \times V^*$. Then, we define the adjoint Hamiltonian associated with the pairing $\langle\cdot,\cdot\rangle_M$ as
$$ H^M(t,\mathbf{q},\mathbf{p}) = \langle \mathbf{p}, K\mathbf{q} + \mathbf{f}(t,\mathbf{q})\rangle_M. $$
The adjoint system \textit{induced} by the pairing $\langle\cdot,\cdot\rangle_M$ is given by Hamilton's equations
\begin{subequations}\label{eq:adjoint-system-M}
\begin{align}
    \dot{\mathbf{q}} &= \frac{\delta}{\delta \mathbf{p}} H^M(t,\mathbf{q},\mathbf{p}) = K\mathbf{q} + \mathbf{f}(t,\mathbf{q}), \label{eq:adjoint-system-M-a} \\
    \dot{\mathbf{p}} &= \frac{\delta}{\delta \mathbf{q}} H^M(t,\mathbf{q},\mathbf{p}) = -K^{*M} \mathbf{p} - [D_{\mathbf{q}}\mathbf{f}(t,\mathbf{q})]^{*M}\mathbf{p}, \label{eq:adjoint-system-M-b}
\end{align}
\end{subequations}
where the above directional (variational) derivatives of a real-valued function $G$ of $\mathbf{q}$ and $\mathbf{p}$ are defined by
\begin{align*}
    \left\langle \delta \mathbf{p}, \frac{\delta G}{\delta \mathbf{q}} \right\rangle_M &= \frac{d}{d\epsilon}\Big|_{\epsilon = 0} G(\mathbf{q}, \mathbf{p}+\epsilon \delta \mathbf{p}) \text{ for all } \delta \mathbf{p} \in V^*, \\
    \left\langle \frac{\delta G}{\delta \mathbf{q}}, \delta \mathbf{q} \right\rangle_M &= \frac{d}{d\epsilon}\Big|_{\epsilon = 0} G(\mathbf{q} + \epsilon \delta \mathbf{q}, \mathbf{p}) \text{ for all } \delta \mathbf{q} \in V.
\end{align*}

Throughout, we adopt the Einstein summation convention, where repeated upper and lower indices are summed over; for example, for $\{\mathbf{p}_i\}_{i=1}^s$ and $\{\mathbf{q}^i\}_{i=1}^s$,
$$ \mathbf{p}_i\mathbf{q}^i := \sum_{i=1}^s \mathbf{p}_i \mathbf{q}^i. $$

\subsection{Semi-Discretization of Adjoint Systems}\label{section:semi-discretization}
We now introduce the notion of a Galerkin semi-discretization of the evolution equation \eqref{Semilinear Evolution Equation} on $X$ into a finite-dimensional subspace $X_h$. We will subsequently drop the subspace requirement to allow for more general semi-discretizations.

As before, let $X$ be a reflexive Banach space with duality pairing $\langle\cdot,\cdot\rangle$ on $X^* \times X$. Let $X_h$ be a finite-dimensional subspace of $X$ with the inclusion $i_h: X_h \hookrightarrow X$, and let $\{\varphi_i\}_{i=1}^{\dim(X_h)}$ be a basis for $X_h$. Let $\{l^j\}_{j=1}^{\dim(X_h)}$ be a set of degrees of freedom for $X_h$, i.e., a basis for $X_h^*$. Let $\langle\cdot,\cdot\rangle_h$ denote the duality pairing on $X_h^* \times X_h$. A \textit{Galerkin semi-discretization} of \eqref{Semilinear Evolution Equation} is specified by a projection $\Pi_h: X \rightarrow X_h$ and an approximation $q(t) \approx \sum_i \mathbf{q}^i(t) \varphi_i $ satisfying the evolution equation on the degrees of freedom in the following sense:
\begin{equation}\label{eq:galerkin-semi-discretization-dof}
    \left\langle l^j, \dot{\mathbf{q}}^i(t) \varphi_i - \mathbf{q}^i(t) \Pi_h A i_h\varphi_i - \Pi_h f(t,\mathbf{q}^k(t)i_h\varphi_k) \right\rangle_h = 0,\ j=1,\dots,\dim(X_h).
\end{equation}
Note that equation \eqref{eq:galerkin-semi-discretization-dof} can be interpreted as the ``restriction" of the state dynamics to the degrees of freedom in the following sense: letting $q(t) = \sum_i \mathbf{q}^i(t) i_h\varphi_i \in X$ denote the semi-discrete curve, viewed as an element of $X$ through the inclusion, we have
\begin{align*}
    0 &= \left\langle l^j, \dot{\mathbf{q}}^i(t) \varphi_i - \mathbf{q}^i(t) \Pi_h A i_h\varphi_i - \Pi_h f(t,\mathbf{q}^k(t)i_h\varphi_k) \right\rangle_h \\
    &= \left\langle l^j, \dot{\mathbf{q}}^i(t) \Pi_h i_h \varphi_i - \mathbf{q}^i(t) \Pi_h A i_h\varphi_i - \Pi_h f(t,\mathbf{q}^k(t)i_h\varphi_k) \right\rangle_h \\
    &= \left\langle l^j, \Pi_h \left( \dot{\mathbf{q}}^i(t) i_h \varphi_i - \mathbf{q}^i(t)  A i_h\varphi_i - f(t,\mathbf{q}^k(t)i_h\varphi_k) \right) \right\rangle_h \\
    &= \left\langle \Pi_h^* l^j, \dot{\mathbf{q}}^i(t) i_h \varphi_i - \mathbf{q}^i(t)  A i_h\varphi_i - f(t,\mathbf{q}^k(t)i_h\varphi_k)  \right\rangle \\
    &= \langle \Pi_h^* l^j, \dot{q}(t) - Aq(t) - f(t,q(t))\rangle,
\end{align*}
where we used the fact that $\Pi_h i_h$ equals the identity on $X_h$, since $\Pi_h$ is a projection, and $\Pi_h^*: X_h^* \rightarrow X$ is the adjoint of $\Pi_h: X \rightarrow X_h$. This gives an interpretation of the semi-discretization \eqref{eq:galerkin-semi-discretization-dof} in terms of the duality pairing on $X^* \times X$; namely, that the quantity describing the state dynamics $\dot{q}(t) - Aq(t) - f(t,q(t))$ in $X$ vanishes when tested by functionals $\Pi_h^*l^j \in X^*$. This interpretation can be equivalently thought of as arising from a complementary subspace to $X_h$ in $X$, namely,
$$ X = X_h \oplus \text{ker}(\Pi_h), $$
from which $X_h^*$ is isomorphic to the annihilator of this complementary subspace,
$$ X_h^* \cong \text{annil}(\text{ker}(\Pi_h)) := \{ p \in X^*: \langle p,v\rangle = 0 \text{ for all } v \in \text{ker}(\Pi_h)\} \subset X^*. $$
This is essential in the Galerkin semi-discretization construction of the adjoint equations in order to make sense of pairings between elements of $X_h^*$, such as the degrees of freedom, and elements of $X$.

Now, let $M$ and $K$ denote mass and stiffness matrices, respectively, with entries
\begin{align*}
    M^j_{\ i} &= \langle l^j, \varphi_i\rangle_h, \\
    K^j_{\ i} &= \langle l^j, \Pi_h A i_h\varphi_i\rangle_h. 
\end{align*}
Let $\mathbf{q}(t)$ denote the vector in $\mathbb{R}^{\dim(X_h)}$ with components $\mathbf{q}^i(t)$, and let the semi-discretized semilinear term be denoted by the vector $\mathbf{f}(t,\mathbf{q})$ with entries
$$ \mathbf{f}^j(t,\mathbf{q}) = \langle l^j, \Pi_h f(t,\mathbf{q}^ki_h\varphi_k) \rangle_h.$$
Then, the semi-discretization can be expressed as
\begin{equation}\label{Semi-discrete Evolution Equation}
    M\frac{d}{dt}\mathbf{q} = K\mathbf{q} + \mathbf{f}(t,\mathbf{q}).
\end{equation}

Now, we form the adjoint system \cite{TrLe2024} for the semi-discrete system \eqref{Semi-discrete Evolution Equation}. First, since $M$ is invertible, we express the above as a standard ODE,
$$ \frac{d}{dt}\mathbf{q} = M^{-1}K \mathbf{q} + M^{-1}\mathbf{f}(t,\mathbf{q}).$$
Note that the adjoint system depends on the duality pairing on $X_h^* \times X_h$ by equations \eqref{eq:adjoint-system-M}. There are two immediately obvious choices of duality pairing. First, since we identify $X_h \cong \mathbb{R}^N$, $N = \dim(X_H)$, where the identification is $\mathbf{q}^i \varphi_i \cong \mathbf{q}$, an obvious choice of duality pairing is just the standard duality pairing on $\mathbb{R}^N$, $\langle \cdot , \cdot \rangle_S$. The adjoint system induced by the standard duality pairing, via equations \eqref{eq:adjoint-system-M}, is given by
\begin{subequations}\label{Semi-discrete Adjoint Canonical System}
\begin{align}\label{Semi-discrete Adjoint Canonical System a}
    \frac{d}{dt}\mathbf{q} &= M^{-1}K \mathbf{q} + M^{-1}\mathbf{f}(t,\mathbf{q}),\\
    \frac{d}{dt}\mathbf{z} &= -K^TM^{-T}\mathbf{z} - [D_{\mathbf{q}}\mathbf{f}(t,\mathbf{q})]^T M^{-T}\mathbf{z}, \label{Semi-discrete Adjoint Canonical System b}
\end{align}
\end{subequations}
where $D_{\mathbf{q}}\mathbf{f}$ is the usual Jacobian of $\mathbf{f}$ with respect to the argument $\mathbf{q}$. Alternatively, we can consider the duality pairing on $\mathbb{R}^N$ naturally induced by the mass matrix, i.e., 
$$ \langle \mathbf{p},\mathbf{v}\rangle_M = \mathbf{p}^TM\mathbf{v}. $$
The adjoint system induced by the mass matrix is given by 
\begin{subequations}\label{eq:semi-discrete-adjoint-2}
\begin{align}
    \frac{d}{dt}\mathbf{q} &= M^{-1}K \mathbf{q} + M^{-1}\mathbf{f}(t,\mathbf{q}), \label{eq:semi-discrete-adjoint-2a}\\
    \frac{d}{dt}\mathbf{p} &= - (K^{*M}M^{-*M} + [D_{\mathbf{q}}\mathbf{f}(t,\mathbf{q})]^{*M} M^{-*M} ) \mathbf{p}. \label{eq:semi-discrete-adjoint-2b}
\end{align}
\end{subequations}
By equation \eqref{eq:matrix-duality-similarity}, the second equation can equivalently be written as
$$ M^T \frac{d}{dt} \mathbf{p} = -K^T \mathbf{p} - [D_{\mathbf{q}}\mathbf{f}(t,\mathbf{q})]^T \mathbf{p}, $$
where we recall that the matrix transpose $^T$ is identified with the adjoint with respect to the standard pairing $^{*S}$ under the above isomorphism $X_h \cong \mathbb{R}^{\dim(X_h)}$.

Note that the above adjoint systems were formed by first semi-discretizing the evolution equation and subsequently forming the adjoint system. We will now reverse this process: we will first form the adjoint system at the continuous level and subsequently semi-discretize. Recall the continuous adjoint system is given by
\begin{align*}
    \dot{q} &= Aq + f(t,q), \\
    \dot{p} &= -A^*p - [Df(t,q)]^*p.
\end{align*}
To semi-discretize this system, we discretize the $q$ variable as before, $q(t) \approx \sum_i \mathbf{q}^i(t) \varphi_i$ with degrees of freedom given by $\{l_j\}$. For $p \in X^*$, we semi-discretize by using the basis $\{l^j\}$ of $X_h^*$ via $p(t) \approx \sum_j \mathbf{p}_j(t) l^j$ and degrees of freedom given by $\{\varphi_i\}$. Furthermore, the projection of the Galerkin method for the adjoint variable is $i_h^*$, the adjoint of the inclusion for the state value, whereas the inclusion for the adjoint variable is $\Pi_h^*$, the adjoint of the projection for the state variable. Note that $i_h^*$ is indeed a projection $X^* \rightarrow X_h^*$, since $i_h^* \Pi_h^* = (\Pi_hi_h)^*$ is the identity on $X_h^*$. As we will see, this is a natural choice of semi-discretization for $p$ since the resulting system is equivalent to \eqref{Semi-discrete Adjoint Canonical System} and \eqref{eq:semi-discrete-adjoint-2}. Furthermore, we will see in Theorem \ref{thm:method-of-lines-adjoint-comm} that it is the unique semi-discretization of the adjoint system covering the base semi-discretization and satisfying a semi-discrete analogue of equation \eqref{eq:adjoint-cons-semilinear}.

\begin{prop}
With the above choice of semi-discretization for the continuous adjoint system, we have the semi-discrete adjoint system 
\begin{subequations}\label{Semi-discrete Adjoint System}
\begin{align}
    M \frac{d}{dt} \mathbf{q} &= K \mathbf{q} + \mathbf{f}(t,\mathbf{q}),\label{Semi-discrete Adjoint System a} \\
    M^T \frac{d}{dt} \mathbf{p} &= - K^T\mathbf{p} - [D_{\mathbf{q}}\mathbf{f}(t,\mathbf{q})]^T\mathbf{p}.\label{Semi-discrete Adjoint System b}
\end{align}
\end{subequations}
Note that this is equivalent to the mass-matrix-induced semi-discrete adjoint system in \eqref{eq:semi-discrete-adjoint-2}.
\begin{proof}
    The semi-discretization \eqref{Semi-discrete Adjoint System a} of the evolution equation is the same as before \eqref{Semi-discrete Evolution Equation}, so we only have to verify \eqref{Semi-discrete Adjoint System b}. The semi-discretization of the adjoint equation is given by
    $$ \langle \dot{\mathbf{p}}_j l^j,\varphi_i\rangle_h = \langle -i_h^*A^*\Pi_h^* \mathbf{p}_j l^j, \varphi_i\rangle_h - \langle i_h^*[D_qf(t,i_h\mathbf{q}^k\varphi_k)]^*\Pi_h^*\mathbf{p}_jl^j,\varphi_i\rangle_h,\ i=1,\dots,\dim(X_h). $$
We consider each term in the above equation. The first term is the $i^{th}$ component of the vector $M^T d\mathbf{p}/dt$. The second term can be expressed as
$\langle -i_h^*A^* \Pi_h^*\mathbf{p}_j l^j, \varphi_i\rangle_h = -\mathbf{p}_j \langle l^j,\Pi_h A i_h\varphi_i\rangle, $
which is the $i^{th}$ component of $-K^T\mathbf{p}$. To see that the third term corresponds to the $i^{th}$ component of $- [D_{\mathbf{q}}\mathbf{f}(t,\mathbf{q})]^T\mathbf{p}$, we explicitly compute the Jacobian
\begin{align*}
    [D_{\mathbf{q}}\mathbf{f}(t,\mathbf{q})]^i_{\ j} &= \frac{\partial}{\partial \mathbf{q}^j}\mathbf{f}^i(t,\mathbf{q}) = \frac{\partial}{\partial \mathbf{q}^j}\langle l^i, \Pi_h f(t, \mathbf{q}^ki_h\varphi_k)\rangle_h \\
    &= \langle l^i, \Pi_h D_qf(t,\mathbf{q}^k\varphi_k)i_h\varphi_j\rangle_h = \langle i_h^*[D_qf(t,\mathbf{q}^k\varphi_k)]^*\Pi_h^* l^i, \varphi_j\rangle_h.
\end{align*}
Thus, we see that the third term is the $i^{th}$ component of $- [D_{\mathbf{q}}\mathbf{f}(t,\mathbf{q})]^T\mathbf{p}$.

\end{proof}
\end{prop}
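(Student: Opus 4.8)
The plan is to reduce the proposition to the definitions of the mass and stiffness matrices together with the adjoint relations among $i_h$, $\Pi_h$, and their transposes, so that everything follows by bookkeeping in a finite-dimensional space. Since the semi-discretization of the state equation \eqref{Semi-discrete Adjoint System a} is literally the one derived earlier in \eqref{Semi-discrete Evolution Equation}, only the adjoint equation \eqref{Semi-discrete Adjoint System b} needs attention. I would start by writing the Galerkin condition for the continuous adjoint equation $\dot p = -A^*p - [D_qf(t,q)]^*p$ under the prescribed discretization: the ansatz $p(t)\approx \Pi_h^*(\mathbf{p}_j(t)l^j)\in X^*$, tested against the degrees of freedom $\{\varphi_i\}$ via the projection $i_h^*$, gives
$$ \langle \dot{\mathbf{p}}_j l^j,\varphi_i\rangle_h = \langle -i_h^* A^*\Pi_h^*\mathbf{p}_j l^j,\varphi_i\rangle_h - \langle i_h^*[D_qf(t,\mathbf{q}^k i_h\varphi_k)]^*\Pi_h^*\mathbf{p}_j l^j,\varphi_i\rangle_h $$
for $i=1,\dots,\dim(X_h)$, which is the object I then analyze term by term.

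Next I would identify each of the three terms. The left-hand side is $\dot{\mathbf{p}}_j\langle l^j,\varphi_i\rangle_h = \dot{\mathbf{p}}_j M^j_{\ i}$, i.e.\ the $i$-th component of $M^T\tfrac{d}{dt}\mathbf{p}$. For the first term on the right, I move operators across the duality pairings using $i_h^*=(i_h)^*$ and $\Pi_h^*=(\Pi_h)^*$ (and the conformity $X_h\subset\mathfrak{D}(A)$ implicit in the Galerkin setup, which makes $\langle l^j,\Pi_hAi_h\varphi_i\rangle_h$ meaningful), obtaining $\langle i_h^*A^*\Pi_h^*l^j,\varphi_i\rangle_h = \langle l^j,\Pi_hAi_h\varphi_i\rangle_h = K^j_{\ i}$; hence this term is the $i$-th component of $-K^T\mathbf{p}$. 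For the third term the crux is the entrywise Jacobian of the discrete nonlinearity: differentiating $\mathbf{f}^i(t,\mathbf{q})=\langle l^i,\Pi_hf(t,\mathbf{q}^k i_h\varphi_k)\rangle_h$ in $\mathbf{q}^j$ yields $[D_{\mathbf{q}}\mathbf{f}]^i_{\ j} = \langle l^i,\Pi_hD_qf(t,\mathbf{q}^k i_h\varphi_k)i_h\varphi_j\rangle_h = \langle i_h^*[D_qf]^*\Pi_h^*l^i,\varphi_j\rangle_h$, which matches (up to transposition of indices) the third term, showing it equals the $i$-th component of $-[D_{\mathbf{q}}\mathbf{f}(t,\mathbf{q})]^T\mathbf{p}$. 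Assembling these identities over all $i$ gives \eqref{Semi-discrete Adjoint System b}.

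Finally, to confirm equivalence with the mass-matrix-induced system \eqref{eq:semi-discrete-adjoint-2}, I would invoke the similarity relation \eqref{eq:matrix-duality-similarity} with $P=M$, left pairing the mass pairing $\langle\cdot,\cdot\rangle_M$ and right pairing the standard one $\langle\cdot,\cdot\rangle_S$, since $\langle\mathbf{p},\mathbf{v}\rangle_M = \langle\mathbf{p},M\mathbf{v}\rangle_S$. Then $K^{*M}=M^{-T}K^TM^T$, and because $M^{*M}=M^{*S}=M^T$ one has $M^{-*M}=M^{-T}$, so $K^{*M}M^{-*M}=M^{-T}K^T$ and likewise $[D_{\mathbf{q}}\mathbf{f}]^{*M}M^{-*M}=M^{-T}[D_{\mathbf{q}}\mathbf{f}]^T$; multiplying \eqref{eq:semi-discrete-adjoint-2b} through by $M^T$ recovers \eqref{Semi-discrete Adjoint System b}. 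The only real obstacle is avoiding sign and index errors while shuttling operators through the several duality pairings and in the Jacobian computation; there is no analytic difficulty here, as all spaces involved are finite-dimensional and the pairings nondegenerate.
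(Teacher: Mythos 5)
Your proof is correct and takes essentially the same route as the paper's: the same Galerkin test equation for the discretized adjoint variable (ansatz $\Pi_h^*(\mathbf{p}_jl^j)$ tested against $\{\varphi_i\}$ via $i_h^*$), the same identification of the left-hand side with $M^T\frac{d}{dt}\mathbf{p}$, the same adjoint-shuttling to recover $-K^T\mathbf{p}$, and the same entrywise Jacobian computation for the nonlinear term. Your closing verification of the equivalence with \eqref{eq:semi-discrete-adjoint-2} via the similarity relation \eqref{eq:matrix-duality-similarity} with $P=M$ is a correct, explicit rendering of what the paper asserts in the surrounding text.
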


Let us formally denote the semi-discretization procedures on $X$ as $S_h$ and the dual semi-discretization on $X^*$ as $S_h^*$. For brevity, we denote the right hand side of the continuous and semi-discrete evolution equations as
\begin{align*}
    g &= Aq + f(t,q), \\
    \mathbf{g} &= K \mathbf{q} + \mathbf{f}(t,\mathbf{q}).
\end{align*}
We denote the procedures of forming the adjoints with respect to the standard duality pairing and the mass matrix induced duality pairing as Adjoint$_S$ and Adjoint$_M$, respectively. Then the preceding discussion can be summarized in the following result. 

\begin{theorem}\label{thm:semi-disc-galerkin-comm}
    The system \eqref{Semi-discrete Adjoint System} arising from forming the continuous adjoint equation and semi-discretizing is equivalent to the systems \eqref{Semi-discrete Adjoint Canonical System} and \eqref{eq:semi-discrete-adjoint-2} that arise from semi-discretizing the state dynamics and forming a discrete adjoint under the appropriate duality pairing and inner product. That is, semi-discretization and forming the adjoint commute, with the above choices of semi-discretization, once composed with the appropriate transformations, as summarized in the commutative diagram \eqref{diagram:semi-discrete-adjoint-comm}.

\begin{equation}\label{diagram:semi-discrete-adjoint-comm}
\begin{tikzcd}[column sep=14ex, row sep =10ex]
	\dot{q} = g \arrow[rr, "\text{Adjoint}"] \arrow[d, "S_h"] & {} & \begin{tabular}{c} $\dot{q} = g$ \\ $\dot{p} = -[D_qg]^*p$ \end{tabular} \arrow[d,"(S_h{,}S_h^*)"] \\ 
    \dot{\mathbf{q}} = M^{-1}\mathbf{g} \arrow[r, "\text{Adjoint}_S"]  \arrow[rr, "\text{Adjoint}_M", bend left = 20] & \begin{tabular}{c} $\dot{\mathbf{q}} = M^{-1} \mathbf{g}$ \\ $\dot{\mathbf{z}} = -[D_{\mathbf{q}}\mathbf{g}]^TM^{-T}\mathbf{z}$ \end{tabular} \arrow[r, "\mathbf{z} = M^T\mathbf{p}", shift left] &  \begin{tabular}{c} $\dot{\mathbf{q}} = M^{-1} \mathbf{g}$ \\ $M^T\dot{\mathbf{p}} = -[D_{\mathbf{q}}\mathbf{g}]^T\mathbf{p}$ \end{tabular} \arrow[l, shift left]
\end{tikzcd}
\end{equation}
\end{theorem}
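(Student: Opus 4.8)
The plan is to verify the commutativity of diagram \eqref{diagram:semi-discrete-adjoint-comm} route by route, since the bulk of the computation has already been assembled. First I would trace the left-and-bottom route: applying $S_h$ to $\dot q = g$ produces the semi-discrete evolution equation \eqref{Semi-discrete Evolution Equation}, which after inverting the invertible mass matrix is the explicit ODE $\dot{\mathbf q} = M^{-1}\mathbf g$ with $\mathbf g = K\mathbf q + \mathbf f(t,\mathbf q)$. Forming its adjoint system with respect to the standard pairing $\langle\cdot,\cdot\rangle_S$ — i.e.\ writing Hamilton's equations \eqref{eq:adjoint-system-M} for $H^S(t,\mathbf q,\mathbf z) = \langle \mathbf z, M^{-1}\mathbf g\rangle_S$ — gives \eqref{Semi-discrete Adjoint Canonical System}, whose costate equation is $\dot{\mathbf z} = -[D_{\mathbf q}(M^{-1}\mathbf g)]^T\mathbf z = -K^TM^{-T}\mathbf z - [D_{\mathbf q}\mathbf f(t,\mathbf q)]^T M^{-T}\mathbf z$; forming instead the adjoint with respect to the mass-matrix pairing $\langle\mathbf p,\mathbf v\rangle_M = \mathbf p^T M\mathbf v$ gives \eqref{eq:semi-discrete-adjoint-2}.

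Next I would establish the equivalence of these two semi-discrete adjoint systems via the change of costate variable $\mathbf z = M^T\mathbf p$ displayed on the bottom arrow. Since $\langle\mathbf p,\mathbf v\rangle_M = \langle \mathbf p, M\mathbf v\rangle_S$, formula \eqref{eq:matrix-duality-similarity} with $P = M$ (so $P^{*S} = M^T$) gives $B^{*M} = M^{-T}B^T M^T$ for every operator $B$; applying this with $B = K$ and $B = D_{\mathbf q}\mathbf f(t,\mathbf q)$ in \eqref{eq:semi-discrete-adjoint-2b} turns it into $M^T \dot{\mathbf p} = -K^T\mathbf p - [D_{\mathbf q}\mathbf f(t,\mathbf q)]^T\mathbf p$, which is exactly \eqref{Semi-discrete Adjoint System b}. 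Conversely, substituting $\mathbf z = M^T\mathbf p$, hence $\dot{\mathbf z} = M^T\dot{\mathbf p}$ and $M^{-T}\mathbf z = \mathbf p$, into \eqref{Semi-discrete Adjoint Canonical System b} recovers the same equation. Thus \eqref{Semi-discrete Adjoint Canonical System}, \eqref{eq:semi-discrete-adjoint-2}, and \eqref{Semi-discrete Adjoint System} are all related by linear isomorphisms of the costate fibres.

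For the right vertical arrow $(S_h, S_h^*)$, the continuous adjoint system $\dot q = Aq + f(t,q)$, $\dot p = -A^*p - [D_qf(t,q)]^*p$ is semi-discretized using $S_h$ on the state and the dual semi-discretization $S_h^*$ on the costate (expanding $p(t) \approx \sum_j \mathbf p_j(t)\, l^j$ with degrees of freedom $\{\varphi_i\}$, projection $i_h^*$, inclusion $\Pi_h^*$); this is precisely the Proposition preceding the theorem, where testing the discretized adjoint equation against $\varphi_i$ and using $\Pi_h i_h = \mathbf{1}_{X_h}$, $i_h^*\Pi_h^* = \mathbf{1}_{X_h^*}$ together with the Jacobian identity $[D_{\mathbf q}\mathbf f(t,\mathbf q)]^i_{\ j} = \langle i_h^*[D_qf(t,\mathbf q^k\varphi_k)]^*\Pi_h^* l^i,\varphi_j\rangle_h$ identifies the three tested terms with the $i$-th components of $M^T\dot{\mathbf p}$, $-K^T\mathbf p$, and $-[D_{\mathbf q}\mathbf f(t,\mathbf q)]^T\mathbf p$, yielding \eqref{Semi-discrete Adjoint System}. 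Assembling the three routes proves the diagram commutes. I expect the only delicate point — and hence the step I would state most carefully — to be the bookkeeping of which duality pairing each adjoint is formed with respect to, and correspondingly which of $i_h,\Pi_h$ (and their adjoints) serves as projection versus inclusion for the costate; once \eqref{eq:matrix-duality-similarity} is invoked with $P = M$, everything that remains is routine linear algebra.
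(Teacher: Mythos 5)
Your proposal is correct and follows essentially the same route as the paper: the theorem there is explicitly a summary of the preceding discussion, namely forming the semi-discrete adjoints \eqref{Semi-discrete Adjoint Canonical System} and \eqref{eq:semi-discrete-adjoint-2} under the two pairings, relating them via \eqref{eq:matrix-duality-similarity} with $P=M$ (i.e.\ $\mathbf{z}=M^T\mathbf{p}$), and invoking the preceding Proposition for the dual Galerkin semi-discretization of the continuous adjoint system yielding \eqref{Semi-discrete Adjoint System}. Your route-by-route verification, including the Jacobian identity used for the semilinear term, reproduces exactly this argument.
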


Although these two systems are equivalent via the coordinate transformation $\mathbf{p} = M^{-T}\mathbf{z}$, we note that each represent a canonical Hamiltonian system on $T^*X_h$ with different coordinate representations and duality pairings. The system \eqref{Semi-discrete Adjoint Canonical System} can be interpreted as a Hamiltonian system on $T^*X_h \cong X_h \times X_h^*$ with duality pairing given by the standard duality pairing on $\mathbb{R}^n$, $\langle \mathbf{x},\mathbf{y}\rangle_S = \mathbf{x}^T\mathbf{y}$, canonical symplectic form given by $\Omega_h^{S} = \langle d\mathbf{q}\wedge d\mathbf{z}\rangle_S = d\mathbf{q}^{\ T} \wedge d\mathbf{z}$, and Hamiltonian
$$ H_h^{S}(t,\mathbf{q},\mathbf{z}) = \mathbf{z}^T M^{-1}K\mathbf{q} + \mathbf{z}^T M^{-1} \mathbf{f}(t,\mathbf{q}). $$
On the other hand, \eqref{eq:semi-discrete-adjoint-2} and \eqref{Semi-discrete Adjoint System} can be interpreted as a Hamiltonian system on $T^*X_h$ with duality pairing $\langle \mathbf{x},\mathbf{y}\rangle_M = \mathbf{x}^TM\mathbf{y}$, with symplectic form $\Omega^M_h = \langle d\mathbf{q}\wedge d\mathbf{p}\rangle_M = (Md\mathbf{q})^T \wedge d\mathbf{p}$, and Hamiltonian
$$ H^M_h(t,\mathbf{q},\mathbf{p}) = \mathbf{p}^{\ T} K\mathbf{q} + \mathbf{p}^T \mathbf{f}(t,\mathbf{q}).  $$
That \eqref{Semi-discrete Adjoint Canonical System} is equivalent to \eqref{Semi-discrete Adjoint System} can be expressed as the fact that the mapping $T^t_M: (t,\mathbf{q},\mathbf{z})\mapsto (t,\mathbf{q},M^{-*}\mathbf{z})$ pulls back the associated Cartan forms as
$$ (T^t_M)^*(\Omega^M_h - dH^M \wedge dt) = \Omega^{S}_h - dH^{S}\wedge dt. $$
In the autonomous case, i.e., where $f$ does not depend explicitly on $t$, this can be expressed as the fact that the map $T_M: (\mathbf{q},\mathbf{z})\mapsto (\mathbf{q},M^{-*}\mathbf{z})$ is a symplectomorphism
$$ (T_M)^*\Omega^M_h = \Omega_h^{S}, $$
and pulls back the Hamiltonian as
$$ (T_M)^*H^M_h = H_h^{S}.$$

From a finite element and discretization perspective, the formulation of \eqref{eq:semi-discrete-adjoint-2} and \eqref{Semi-discrete Adjoint System} is more natural, as the duality pairing on $T^*X_h$ is induced from the duality pairing on $T^*X$ through the inclusion and projection. In particular, with $p = \mathbf{p}_j \Pi_h^*l^j \in X^*$ and $q = \mathbf{q}^i i_h\varphi_i \in X$, we have
$$ \langle \mathbf{p},\mathbf{q}\rangle_M = \mathbf{p}_j M^j_{\ i} \mathbf{q}^i = \mathbf{p}_j \langle l^j, \varphi_i\rangle_h \mathbf{q}^i = \mathbf{p}_j \langle l^j, \Pi_h i_h\varphi_i\rangle_h \mathbf{q}^i = \langle \mathbf{p}_j\Pi^*_h l^j, \mathbf{q}^i_h\varphi_i \rangle  = \langle p,q\rangle. $$
From this observation, it is straightforward to verify that for the mapping $T_h: X_h \times X^*_h \rightarrow X \times X^*$ defined by $T_h(\mathbf{q},\mathbf{p}) = (\mathbf{q}^i i_h\varphi_i , \mathbf{p}_j\Pi_h^*l^j)$,
the semi-discrete Hamiltonian and semi-discrete symplectic structure are related to their infinite-dimensional counterparts as 
\begin{equation}\label{eq:semi-inf-relation}
(T_h)^*\Omega = \Omega_h^M,\quad (T_h)^*H = H_h^M, 
\end{equation}
where $(T_h)^*$ is the pullback of $T_h$, mapping forms on $X \times X^*$ to forms on $X_h \times X_h^*$. To see this, for any $\mathbf{q}^i \varphi_i \in X_h$ and $\mathbf{p}_j l^j \in X_h^*$, we have
\begin{align*}
    ((T_h)^*H)(\mathbf{q},\mathbf{p}) &= H \circ T_h(\mathbf{q},\mathbf{p}) = H(\mathbf{q}^i i_h\varphi_i, \mathbf{p}_j\Pi_h^*l^j) = \langle \mathbf{p}_j\Pi_h^*l^j, \mathbf{q}^iAi_h\varphi_i + f(\mathbf{q}^ii_h\varphi_i)\rangle \\ 
    &= \mathbf{p}_j \langle l^j, \Pi_h A i_h \varphi_i\rangle_h \mathbf{q}^i + \mathbf{p}_j \langle l^j, \Pi_h f(\mathbf{q}^i i_h\varphi_i)\rangle_h = \mathbf{p}^T K \mathbf{q} + \mathbf{p}^T \mathbf{f}(\mathbf{q}) = H^M_h(\mathbf{q},\mathbf{p}),
\end{align*}
and a similar computation holds to show that $(T_h)^*\Omega = \Omega_h^M$. Equation \eqref{eq:semi-inf-relation} is the statement that the semi-discrete Hamiltonian structure is the Galerkin restriction of the infinite-dimensional Hamiltonian structure.

\begin{remark}
    In the commutative diagram \eqref{diagram:semi-discrete-adjoint-comm}, we utilize two choices of duality pairings $\langle\cdot,\cdot\rangle_M$ and $\langle\cdot,\cdot\rangle_S$. We include the standard duality pairing as it is the usual duality pairing used to form adjoint systems. We include the mass matrix induced duality pairing since the OtD method given by the dual semi-discretization $(S_h, S_h^*)$ is naturally equivalent to a DtO method with respect to the mass matrix, whereas it is only equivalent to the DtO method with respect to the standard duality pairing once composed with the appropriate transformation $T^t_M$. Note also that a diagram analogous to \eqref{diagram:semi-discrete-adjoint-comm} holds with replacing the standard duality pairing with an arbitrary duality pairing.
\end{remark}

Note that both systems satisfy a semi-discrete adjoint-variational conservation law. 

\begin{prop}
    The mass matrix induced adjoint system \eqref{Semi-discrete Adjoint System} admits a quadratic adjoint-variational conservation law. For a solution $\mathbf{p}(t)$ of the adjoint equation \eqref{Semi-discrete Adjoint System b} and a solution $\delta \mathbf{q}(t)$ of the variational equation associated with \eqref{Semi-discrete Evolution Equation}, covering the same solution $\mathbf{q}(t)$ of \eqref{Semi-discrete Evolution Equation}, 
    \begin{equation}\label{eq:semi-disc-quad-cons-law}
    \frac{d}{dt} \langle \mathbf{p}(t), \delta \mathbf{q}(t)\rangle_M = 0.
    \end{equation}
    Similarly, the standard duality pairing induced adjoint system \eqref{Semi-discrete Adjoint Canonical System} admits a quadratic adjoint-variational conservation law. For a solution $\mathbf{z}(t)$ of the adjoint equation  \eqref{Semi-discrete Adjoint Canonical System b} and a solution $\delta \mathbf{q}(t)$ of the variational equation covering the same solution $\mathbf{q}(t)$ of the semi-discrete evolution equation, 
    \begin{equation}\label{eq-semi-disc-quad-cons-law-standard}
    \frac{d}{dt} \langle \mathbf{z}(t), \delta \mathbf{q}(t)\rangle_S = 0.     
    \end{equation}
    \begin{proof}
        These follow from Proposition \ref{prop:ManifoldQuadraticInvariant} in the standard adjoint theory for ODEs.
    \end{proof}
\end{prop}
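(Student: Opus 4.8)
The plan is to reduce both statements to Proposition \ref{prop:ManifoldQuadraticInvariant}, the quadratic invariant for adjoint systems of ODEs, together with the coordinate change $\mathbf{z} = M^T\mathbf{p}$ already recorded in the commutative diagram \eqref{diagram:semi-discrete-adjoint-comm}; the whole argument is bookkeeping, so I will keep it short.

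First I would observe that \eqref{Semi-discrete Adjoint Canonical System} is, by construction, the adjoint system in the sense of Section \ref{section:geometry-adjoints-review} of the finite-dimensional ODE $\dot{\mathbf{q}} = M^{-1}\mathbf{g}(t,\mathbf{q})$ on $V = X_h$, with $\mathbf{g} = K\mathbf{q} + \mathbf{f}(t,\mathbf{q})$ and the standard duality pairing $\langle\cdot,\cdot\rangle_S$ on $\mathbb{R}^{\dim(X_h)}$: indeed, with $H^S_h(t,\mathbf{q},\mathbf{z}) = \langle \mathbf{z}, M^{-1}\mathbf{g}(t,\mathbf{q})\rangle_S$, equations \eqref{eq:adjoint-system-M} (with $\langle\cdot,\cdot\rangle_M$ taken to be $\langle\cdot,\cdot\rangle_S$) reproduce \eqref{Semi-discrete Adjoint Canonical System}. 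Next I would check that the variational equation of this ODE is exactly the variational equation of the semi-discrete evolution equation \eqref{Semi-discrete Evolution Equation}: linearizing $M\tfrac{d}{dt}\mathbf{q} = K\mathbf{q} + \mathbf{f}(t,\mathbf{q})$ gives $M\tfrac{d}{dt}\delta\mathbf{q} = (K + D_{\mathbf{q}}\mathbf{f}(t,\mathbf{q}))\delta\mathbf{q} = D_{\mathbf{q}}\mathbf{g}(t,\mathbf{q})\,\delta\mathbf{q}$, i.e.\ $\tfrac{d}{dt}\delta\mathbf{q} = D_{\mathbf{q}}(M^{-1}\mathbf{g})(t,\mathbf{q})\,\delta\mathbf{q}$, as required. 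Proposition \ref{prop:ManifoldQuadraticInvariant} then yields \eqref{eq-semi-disc-quad-cons-law-standard} immediately. The only point of care is that the vector field $M^{-1}\mathbf{g}$ is time-dependent whereas Proposition \ref{prop:ManifoldQuadraticInvariant} is phrased for an autonomous field; this is handled in the usual way, either by passing to the extended phase space ($\dot{t}=1$) or by noting that the two-line verification $\tfrac{d}{dt}\langle\mathbf{z},\delta\mathbf{q}\rangle_S = \langle\dot{\mathbf{z}},\delta\mathbf{q}\rangle_S + \langle\mathbf{z},\tfrac{d}{dt}\delta\mathbf{q}\rangle_S = -\langle[D_{\mathbf{q}}\mathbf{g}]^T M^{-T}\mathbf{z},\delta\mathbf{q}\rangle_S + \langle\mathbf{z}, M^{-1}[D_{\mathbf{q}}\mathbf{g}]\,\delta\mathbf{q}\rangle_S = 0$ is insensitive to the frozen value of $t$.

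For the mass-matrix system \eqref{Semi-discrete Adjoint System} I would not redo the computation but instead invoke the transformation $\mathbf{z} = M^T\mathbf{p}$ relating \eqref{Semi-discrete Adjoint Canonical System} and \eqref{Semi-discrete Adjoint System} in \eqref{diagram:semi-discrete-adjoint-comm}. Since $\mathbf{q}$ and $\delta\mathbf{q}$ are unchanged under this transformation and
$$ \langle\mathbf{p},\delta\mathbf{q}\rangle_M = \mathbf{p}^T M\,\delta\mathbf{q} = (M^T\mathbf{p})^T\delta\mathbf{q} = \mathbf{z}^T\delta\mathbf{q} = \langle\mathbf{z},\delta\mathbf{q}\rangle_S, $$
the conservation law \eqref{eq:semi-disc-quad-cons-law} is literally \eqref{eq-semi-disc-quad-cons-law-standard} written in the other coordinates, hence follows. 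Alternatively, and equivalently, \eqref{Semi-discrete Adjoint System} is the adjoint system of $\dot{\mathbf{q}} = M^{-1}\mathbf{g}(t,\mathbf{q})$ induced by the mass-matrix pairing $\langle\cdot,\cdot\rangle_M$ via \eqref{eq:adjoint-system-M}, so $\tfrac{d}{dt}\langle\mathbf{p},\delta\mathbf{q}\rangle_M = 0$ is precisely the invariant of Proposition \ref{prop:ManifoldQuadraticInvariant} read off for that pairing; in either guise the invariance is a restatement of symplecticity of the Hamiltonian flow on $T^*X_h$.

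There is essentially no obstacle: the work is to confirm that \eqref{Semi-discrete Adjoint Canonical System} and \eqref{Semi-discrete Adjoint System} are genuine instances of the finite-dimensional adjoint construction of Section \ref{section:geometry-adjoints-review} and that their variational equations are the expected ones. The only mild subtleties are the time-dependence noted above and — if one prefers the intrinsic argument for the mass-matrix case — applying Proposition \ref{prop:ManifoldQuadraticInvariant} with a general duality pairing rather than the canonical cotangent pairing, for which the change-of-variables computation displayed above is the cleanest bypass.
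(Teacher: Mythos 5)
Your argument is correct and is essentially the paper's proof: the paper simply cites Proposition \ref{prop:ManifoldQuadraticInvariant} from the finite-dimensional adjoint theory, and you reduce both conservation laws to that same proposition, merely spelling out the verification that \eqref{Semi-discrete Adjoint Canonical System} and \eqref{Semi-discrete Adjoint System} are instances of the ODE adjoint construction and that the two invariants are related by $\mathbf{z} = M^T\mathbf{p}$. The extra details (time-dependence, the pairing change) are fine and consistent with the paper's framework.
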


As we will see shortly when discussing more general semi-discretizations, once the semi-discretization $S_h$ for the state variable is fixed, and a duality pairing is chosen, then the full semi-discretization on $X_h \times X_h^*$ is the unique semi-discretization of the fully continuous adjoint system such that the conservation law, \eqref{eq:semi-disc-quad-cons-law} or \eqref{eq-semi-disc-quad-cons-law-standard}, corresponding to the choice of duality pairing holds.

\textbf{More general semi-discretizations.}
Note that we made a particular choice of semi-discretization of the Banach space $X$. When $X$ is a function space, it can be thought of as a spatial discretization via a subspace method, such as the finite element method. Once this semi-discretization of $X$ is fixed, there is a natural dual choice of semi-discretization for the adjoint system on $X^* \times X$ such that it arises as the adjoint system of the semi-discretized evolution equation on $X$. Also, note that this notion of semi-discretization is fairly general. $X$ need not be an infinite-dimensional function space; it applies to sequence spaces such as $l^2(\mathbb{R})$ or even cases where $X$ is finite-dimensional (in which case, a semi-discretization of $X$ can be seen as a dimensional reduction to a ``lower order" space).
    
However, more general semi-discretizations are possible. For example, $X_h$ need not be chosen as a subspace but is more generally an approximating space that one can define an approximation of $A$ on. This allows more flexibility in the choice of semi-discretization, such as mixed methods and Discontinuous Galerkin methods. Furthermore, this allows for more freedom in treating the semilinear term, e.g., by quadrature. In essence, as before, once a semi-discretization of the evolution equation is fixed, there is a natural dual choice of semi-discretization of the adjoint system such that the corresponding diagram of adjoining and semi-discretization commute. Abstractly, such a semi-discretization and its dual semi-discretization reduce the adjoint Hamiltonian system on $T^*X$ to an adjoint Hamiltonian system on $T^*X_h$.

To be more precise, we introduce the following more general notion of semi-discretization: a \textit{method-of-lines semi-discretization} of an evolution equation on a Banach space $X$ is a procedure for mapping the evolution equation into an ODE on a finite-dimensional vector space $X_h$. Note that $X_h$ need not be a subspace of $X$. More precisely, we introduce the following definition.
\begin{definition}\label{def:method-of-lines}
    A method-of-lines semi-discretization for the class of semilinear evolution equations on $X$ into a finite-dimensional vector space $X_h$ is specified by a mapping $K_h[\cdot,\cdot]$ whose inputs are a closed and densely defined unbounded operator $A$ on $X$ and a time-dependent nonlinear operator $f$ on $X$ and whose output is a time-dependent vector field $K_h[A,f]$ on $X_h$.

    Associated with the continuous evolution equation of the form \eqref{Semilinear Evolution Equation}, the method-of-lines semi-discretization has the associated ODE on $X_h$ given by
    \begin{equation} \label{eq:method-of-lines-semi-discretization}
    \frac{d}{dt} \mathbf{q} = K_h[A,f](t,\mathbf{q}), 
    \end{equation}
\end{definition}

\begin{example}
    A Galerkin semi-discretization is a particular example of a method-of-lines semi-discretization, where $X_h$ is a finite-dimensional subspace of $X$ with a projection $\Pi_h: X \rightarrow X$. In this case, mapping $K_h[\cdot,\cdot]$ is explicitly
    $$    K_h[A,f](t,\mathbf{q}) = M^{-1}K\mathbf{q} + M^{-1}\mathbf{f}(t,\mathbf{q}), $$
    where $K$ and $\mathbf{f}$ are defined in terms of $A$ and $f$ as discussed in the previous section:
   \begin{align*}
       K^j_{\ i} &= \langle l^j, \Pi_h A i_h\varphi_i\rangle_h.\\
       \mathbf{f}^j(t,\mathbf{q}) &= \langle l^j, \Pi_h f(t,\mathbf{q}^ki_h\varphi_k) \rangle_h
   \end{align*}
\end{example}

Of course, one wants that the solution to the semi-discrete problem converges as $h \rightarrow 0$, in some sense, to a solution of the continuous problem but we will not discuss this here as it will depend generally on the choice of semi-discretization. We will assume that $K_h[A,f](t,\mathbf{q})$ is differentiable in $\mathbf{q}$ given that $f(t,q)$ is differentiable in $q$. We will assume that any method-of-lines semi-discretization admits a solution on the interval $[0,t_f]$ where the semilinearity $f$ and its derivative are uniformly bounded as discussed in Section \ref{section:adjoint-systems-evolution-pde} (which is the case for Galerkin semi-discretization, since the associated semi-discrete semilinear term $\mathbf{f}$ enjoys the same bounds).

Now, we can state the following very general result regarding adjoining and semi-discretization.

\begin{theorem}\label{thm:method-of-lines-adjoint-comm}
    A method-of-lines semi-discretization of the adjoint system \eqref{eq:adjoint-system-semilinear} on $X \times X^*$ into a vector space $P_h$ commutes with the process of semi-discretization of the evolution equation \eqref{Semilinear Evolution Equation} into a vector space $X_h$ followed by adjoining if and only if it is equivalent to the adjoint system on $X_h \times X_h^*$, equipped with a duality pairing $\langle\cdot,\cdot\rangle_h: X_h^* \times X_h \rightarrow \mathbb{R}$, formed from the semi-discrete ODE \eqref{eq:method-of-lines-semi-discretization}; namely,
    \begin{subequations}\label{eq:method-of-lines-adjoint}
    \begin{align}
        \frac{d}{dt} \mathbf{q} &= K_h[A,f](t,\mathbf{q}), \label{eq:method-of-lines-adjoint-a} \\
        \frac{d}{dt} \mathbf{z} &= - [D_{\mathbf{q}}K_h[A,f](t,\mathbf{q})]^{*h} \mathbf{z}. \label{eq:method-of-lines-adjoint-b}
    \end{align}
    \end{subequations}
    Furthermore, given a method-of-lines semi-discretization \eqref{eq:method-of-lines-semi-discretization} of the evolution equation, the dual semi-discretization \eqref{eq:method-of-lines-adjoint} is the unique method-of-lines semi-discretization into $X_h \times X_h^*$, equipped with the duality pairing $\langle\cdot,\cdot\rangle_h$, of the adjoint system covering \eqref{eq:method-of-lines-semi-discretization} on the interval $[0,t_f]$ such that the adjoint-variational conservation law holds,
    $$ \frac{d}{dt} \langle \mathbf{z}(t), \delta \mathbf{q}(t) \rangle_h = 0, $$
    where $\delta \mathbf{q}(t)$ is the solution of the variational equation associated with \eqref{eq:method-of-lines-semi-discretization},
    $$ \frac{d}{dt} \delta \mathbf{q} = [D_{\mathbf{q}}K_h[A,f](t,\mathbf{q})] \delta \mathbf{q},$$
    with arbitrary but fixed initial condition $\delta \mathbf{q}(t_0)= \delta \mathbf{q}_0$ for any $t_0 \in [0,t_f)$ and terminal condition $\mathbf{z}(t_f)=\mathbf{z}_f$.
    \begin{proof}
        The first statement of the theorem simply follows from the definitions and a direct calculation that \eqref{eq:method-of-lines-adjoint-b} is the adjoint equation associated with \eqref{eq:method-of-lines-adjoint-a}.

        For the second statement, clearly \eqref{eq:method-of-lines-adjoint} satisfies the above adjoint-variational conservation law by Proposition \ref{prop:ManifoldQuadraticInvariant}.
        
        To show that it is unique, suppose we have another method-of-lines semi-discretization of the adjoint system covering the semi-discretization of the evolution equation, i.e., we have a semi-discretization of the continuous adjoint system of the form
        \begin{align*}
        \frac{d}{dt} \mathbf{q} &= K_h[A,f](t,\mathbf{q}),\\
        \frac{d}{dt} \tilde{\mathbf{z}} &= L_h(t,\mathbf{q},\tilde{\mathbf{z}}),
        \end{align*}
        satisfying
        $$ \frac{d}{dt} \langle \tilde{\mathbf{z}}(t), \delta \mathbf{q}(t) \rangle_h = 0. $$ Since both semi-discretizations satisfy the Type II boundary conditions fixing $\delta \mathbf{q}(t_0) = \delta \mathbf{q}_0$ and $\mathbf{z}(t_f) = \mathbf{z}_f = \tilde{\mathbf{z}}(t_f)$, we have by integrating their respective quadratic conservation laws from $t_0$ to $t_f$,
        \begin{align*}
            \langle \tilde{\mathbf{z}}(t_0), \delta \mathbf{q}_0\rangle_h = \langle \tilde{\mathbf{z}}(t_f), \delta \mathbf{q}(t_f) \rangle_h = \langle \mathbf{z}_f, \delta \mathbf{q}(t_f) \rangle_h = \langle \mathbf{z}(t_f), \delta \mathbf{q}(t_f) \rangle_h = \langle \mathbf{z}(t_0), \delta \mathbf{q}_0 \rangle_h.
        \end{align*}
        In particular, since $\delta \mathbf{q}_0$ is arbitrary, we have
        $$ \tilde{\mathbf{z}}(t_0) = \mathbf{z}(t_0). $$
        Since $t_0 \in [0,t_f)$ is arbitrary, we have $\mathbf{z}(t) = \tilde{\mathbf{z}}$ for all $t \in [0,t_f]$. Thus,
        $$ L_h(t,\mathbf{q}(t),\mathbf{z}(t)) = L_h(\mathbf{q}(t),\tilde{\mathbf{z}}(t)) = \frac{d}{dt} \tilde{\mathbf{z}}(t) = \frac{d}{dt} \mathbf{z}(t) = - [D_{\mathbf{q}}K_h[A,f](t,\mathbf{q}(t))]^{*h} \mathbf{z}(t), $$
        i.e., they are the same semi-discretization. 
    \end{proof}
\end{theorem}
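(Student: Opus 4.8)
The plan is to split the argument into two parts, mirroring the two assertions in the statement. For the equivalence of ``commuting'' with ``being the adjoint system of the semi-discrete ODE,'' I would simply trace through the definitions. A method-of-lines semi-discretization of the \emph{adjoint} system \eqref{eq:adjoint-system-semilinear} takes the pair of operators $(A, f)$ together with the cotangent-lift structure and produces a vector field on $P_h$; on the other side, semi-discretizing \eqref{Semilinear Evolution Equation} first yields the ODE $\dot{\mathbf{q}} = K_h[A,f](t,\mathbf{q})$, and adjoining this (in the sense of Section \ref{section:geometry-adjoints-review}, with the chosen duality pairing $\langle\cdot,\cdot\rangle_h$ on $X_h^* \times X_h$) yields precisely the cotangent-lifted system \eqref{eq:method-of-lines-adjoint}, whose adjoint equation is $\dot{\mathbf{z}} = -[D_{\mathbf{q}}K_h[A,f](t,\mathbf{q})]^{*h}\mathbf{z}$ by the coordinate formula recalled after Proposition \ref{LiftIntegralCurveProp}. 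So commutativity holds iff the given semi-discretization of the adjoint system lands on $X_h \times X_h^*$ with this pairing and equals \eqref{eq:method-of-lines-adjoint}; this is essentially definitional once the bookkeeping of which space the adjoint variable lives in is made explicit.

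For the uniqueness statement, I would argue as follows. First, the system \eqref{eq:method-of-lines-adjoint} satisfies the adjoint-variational conservation law $\frac{d}{dt}\langle \mathbf{z}(t), \delta\mathbf{q}(t)\rangle_h = 0$ immediately from Proposition \ref{prop:ManifoldQuadraticInvariant}, applied to the ODE $\dot{\mathbf{q}} = K_h[A,f](t,\mathbf{q})$ on $X_h$, since \eqref{eq:method-of-lines-adjoint-b} is its adjoint equation and the variational equation is $\frac{d}{dt}\delta\mathbf{q} = [D_{\mathbf{q}}K_h[A,f](t,\mathbf{q})]\delta\mathbf{q}$. Now suppose some other semi-discretization $\dot{\tilde{\mathbf{z}}} = L_h(t,\mathbf{q},\tilde{\mathbf{z}})$ covers the same state semi-discretization and also satisfies $\frac{d}{dt}\langle\tilde{\mathbf{z}}(t),\delta\mathbf{q}(t)\rangle_h = 0$. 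Fix any $t_0 \in [0,t_f)$, any $\delta\mathbf{q}_0 \in X_h$, and impose the Type II conditions $\delta\mathbf{q}(t_0) = \delta\mathbf{q}_0$, $\mathbf{z}(t_f) = \tilde{\mathbf{z}}(t_f) = \mathbf{z}_f$. Integrating both conservation laws over $[t_0, t_f]$ gives $\langle\tilde{\mathbf{z}}(t_0),\delta\mathbf{q}_0\rangle_h = \langle\mathbf{z}_f,\delta\mathbf{q}(t_f)\rangle_h = \langle\mathbf{z}(t_0),\delta\mathbf{q}_0\rangle_h$. Since $\delta\mathbf{q}_0$ is arbitrary and $\langle\cdot,\cdot\rangle_h$ is nondegenerate, $\tilde{\mathbf{z}}(t_0) = \mathbf{z}(t_0)$; as $t_0$ ranges over $[0,t_f)$ this forces $\tilde{\mathbf{z}} \equiv \mathbf{z}$ on $[0,t_f]$, and evaluating the two defining ODEs along this common curve gives $L_h(t,\mathbf{q}(t),\mathbf{z}(t)) = -[D_{\mathbf{q}}K_h[A,f](t,\mathbf{q}(t))]^{*h}\mathbf{z}(t)$, so the two semi-discretizations agree.

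One subtlety to be careful about: the uniqueness argument as sketched pins down the value of the vector field $L_h$ only along solution curves of the state equation, evaluated at the particular costate values produced by the dynamics. To conclude that $L_h$ agrees with the cotangent-lifted right-hand side \emph{as functions} on (the relevant part of) $X_h \times X_h^*$, I would note that varying the terminal data $\mathbf{z}_f$ and the initial time sweeps out an open set of $(\mathbf{q},\mathbf{z})$ pairs, so continuity of $L_h$ extends the identity; alternatively one restricts attention to the class of semi-discretizations that are $C^1$ and phrases uniqueness at the level of the induced flows, which is all that is needed for the DtO-versus-OtD application. I expect this extension-from-trajectories-to-functions point to be the only place requiring genuine care; the rest is a direct transcription of the finite-dimensional adjoint theory of \cite{TrLe2024} recalled in Section \ref{section:geometry-adjoints-review}, now with $M = X_h$ and the pairing $\langle\cdot,\cdot\rangle_h$ in place of the standard one.
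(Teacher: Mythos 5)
Your proposal is correct and takes essentially the same route as the paper: the first claim is treated as definitional, and uniqueness is proved by integrating the two conservation laws over $[t_0,t_f]$ with Type II data $\delta\mathbf{q}(t_0)=\delta\mathbf{q}_0$, $\mathbf{z}(t_f)=\tilde{\mathbf{z}}(t_f)=\mathbf{z}_f$, then using nondegeneracy of $\langle\cdot,\cdot\rangle_h$ and arbitrariness of $\delta\mathbf{q}_0$ and $t_0$ to force $\tilde{\mathbf{z}}\equiv\mathbf{z}$ and hence agreement of the right-hand sides. The trajectory-versus-function subtlety you flag is a fair point that the paper passes over silently (its final identity for $L_h$ is likewise only asserted along the solution curve), and your suggested fix by varying the boundary data or phrasing uniqueness at the level of flows is a reasonable way to close it.
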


\begin{remark}
    Note that in the above theorem, we allow the semi-discretization of the adjoint system to map into a vector space $P_h$, not necessarily $X_h \times X_h^*$, and it suffices to require that the semi-discretization of the adjoint system on $P_h$ is equivalent to the adjoint system on $X_h \times X_h^*$ formed from \eqref{eq:method-of-lines-semi-discretization}, i.e., there exists an invertible transformation $\Phi_{P_h}: P_h \rightarrow X_h \times X_h^*$ mapping the corresponding semi-discrete adjoint systems to each other. More precisely, $P_h$ must have a Hamiltonian structure given by pulling back the Hamiltonian structure on $X_h \times X_h^*$,
    \begin{align*}
        \Omega_{P_h} &= \Phi_{P_h}^* \Omega_h, \\
        H_{P_h} &= \Phi_{P_h}^* H_h,
    \end{align*}
    where $\Omega_h$ is the canonical symplectic form on $X_h \times X_h^*$ and $H_h = \langle \mathbf{z}, K_h[A,f](\mathbf{q})\rangle_h$ is the adjoint Hamiltonian associated with the system \eqref{eq:method-of-lines-adjoint} (here, we assume that $f$ is time-independent for simplicity, but an analogous statement holds for the time-dependent case by considering the associated Cartan forms, as described in the previous Galerkin semi-discretization case).

    In the literature, it is often the case that the semi-discretization of the adjoint system is not formulated from the cotangent bundle $X \times X^*$ to $X_h \times X_h^*$. When $X$ is a Hilbert space, often the identification of $X^*$ with $X$ via the Riesz representation theorem is utilized to write the adjoint system as a system on $X \times X$. For example, this is done in \cite{NoWa2007} for the adjoint system associated with Burgers' equation, where the same (piecewise linear finite element) method-of-lines semi-discretization is used for both the state dynamics in the variable $y$ and the adjoint equation in the variable $p$, resulting in a semi-discrete ODE on $X_h \times X_h$. Thus, in terms of the notation introduced in the above theorem, the semi-discretization in \cite{NoWa2007} utilizes $P_h = X_h \times X_h$. With this example in mind, note that the uniqueness result from the above theorem applies more generally to dual systems \cite{TVS1999} of the form $P_h = X_h \times Y_h$ equipped with a non-degenerate bilinear form $b: X_h \times Y_h \rightarrow \mathbb{R}$; the prototypical examples are $P_h = X_h \times X_h^*$ equipped with a duality pairing and $P_h = X_h \times X_h$ equipped with an inner product. Note that the non-degenerate bilinear form $b$ defines an isomorphism $b^\flat : Y_h \rightarrow X_h^*$ and hence, induces a symplectic structure on the dual system.

    As an immediately corollary to Theorem \ref{thm:method-of-lines-adjoint-comm}, we have that any method-of-lines semi-discretization of an infinite-dimensional adjoint system which corresponds to the adjoint of a method-of-lines semi-discretization of the evolution equation must necessarily have a Hamiltonian structure on $P_h$, since it must be equivalent to the Hamiltonian structure on $X_h \times X_h^*$. Heuristically, for semi-discretize-then-optimize and optimize-then-semi-discretize methods to commute, the optimize-then-semi-discretize method must necessarily preserve the Hamiltonian structure. 

    Note that the above uniqueness depends on the choice of the duality pairing; for example, as we have seen explicitly in the Galerkin semi-discretization case, there are two semi-discretizations on $X_h \times X_h^*$ (which we identified with $\mathbb{R}^N \times \mathbb{R}^{N*}$ using a basis) which satisfy the adjoint quadratic conservation law; namely, one with respect to the standard duality pairing on $\mathbb{R}^N$ and one with respect to the duality pairing $\langle\cdot,\cdot\rangle_M$. Although the two systems arising from different duality pairings are equivalent via a similarity transformation, this subtle distinction becomes important when moving to the fully discrete setting by incorporating time integration, as we will explain in Section \ref{section:naturality}.
\end{remark}

\subsection{Time Integration}\label{section:time-integration}
To completely discretize a semi-discrete system, we have to further integrate the system in time and thus consider in this section the relation between time integration and forming adjoints. Note that the results of this section hold for general adjoint systems for ODEs, not just those that arise from semi-discretization; as such, we will consider an ODE on a finite-dimensional manifold $V$ for greater generality. To begin, we will recall some facts about maps and time integration for ODEs. 

Consider an ODE $\dot{\mathbf{y}} = \mathbf{g}(t,\mathbf{y})$ on a finite-dimensional manifold $V$. We will denote the fiber-wise duality pairing on $T_\mathbf{y}V^* \times T_\mathbf{y}V$ as $\langle\cdot,\cdot\rangle$ and the adjoint of an operator $B$ as $B^*$.

Let $\Phi:V \rightarrow V$ be a (local) diffeomorphism. Recall that the \textit{tangent lift} of $\Phi$, denoted $T\Phi: TV \rightarrow TV$, is defined by
$$ T\Phi(\mathbf{v}_{\mathbf{y}}) = T_{\mathbf{y}}\Phi(\mathbf{v}_{\mathbf{y}}) \in T_{\Phi(\mathbf{y})}V \text{ for } \mathbf{v}_{\mathbf{y}} \in T_\mathbf{y}V, $$
where $T_\mathbf{y}\Phi(\mathbf{v}_\mathbf{y})$ is the linearization of $\Phi$ at $\mathbf{y}$, which is represented as the Jacobian of $\Phi$ in a local chart. This induces a dual map on the cotangent spaces, for $\mathbf{a} \in T^*_{\Phi(\mathbf{y})}V$, by
\begin{equation}\label{eq:dual-tangent}\langle T^*_\mathbf{y}\Phi(\mathbf{a}), \mathbf{v}_\mathbf{y} \rangle = \langle \mathbf{a}, T_\mathbf{y}\Phi(\mathbf{v}_\mathbf{y}) \rangle \text{ for all } \mathbf{v}_\mathbf{y} \in T_\mathbf{y}V. 
\end{equation}
We then define the cotangent lift of $\Phi$ to be $T^*\Phi^{-1}$, which is a (local) vector bundle morphism $T^*\Phi^{-1}: T^*V \rightarrow T^*V$. Furthermore, it is a (local) symplectomorphism, with $T^*V$ equipped with its canonical symplectic form. 

Now, consider a one-step method for the ODE $\dot{\mathbf{y}} = \mathbf{g}(t,\mathbf{y})$, which is specified by a map 
$$ \Phi_{\Delta t}[n,\mathbf{g}]: V \rightarrow V $$
which depends on the current time $t_n = t_0 + n\Delta t$ and the vector field $\mathbf{g}$ defining the ODE; for the discussion that follows, we think of $n$ as fixed and thus, $\Phi_{\Delta t}[n,\mathbf{g}]$ defines a map from $V$ to $V$. The one-step method is given by
$$ \mathbf{y}_{n+1} = \Phi_{\Delta t}[n,\mathbf{g}] (\mathbf{y}_n). $$
We will assume that $\Phi_{\Delta t}[n,\mathbf{g}]$ is a local diffeomorphism, which is generally true by an implicit function argument, given differentiability of $\mathbf{g}$ in its second argument and a sufficiently small time step. Thus, we can define its cotangent lift $T^*\Phi^{-1}_{\Delta t}[n,\mathbf{g}]$. As the cotangent lift is a symplectomorphism, we have that the method
\begin{align*}
    \mathbf{y}_{n+1} &= \Phi_{\Delta t}[n,\mathbf{g}] (\mathbf{y}_n), \\
    \mathbf{p}_{n+1} &= T^*\Phi^{-1}_{\Delta t}[n,\mathbf{g}] \mathbf{p}_n,
\end{align*}
is a symplectic method on $T^*V$, i.e., $\langle d\mathbf{y}_{n+1} \wedge d\mathbf{p}_{n+1} \rangle = \langle d\mathbf{y}_{n} \wedge d\mathbf{p}_{n} \rangle$. With the solution curve $\{y_n\}$ for the state variable fixed, we interpret the cotangent lift as a one-step map $T^*\Phi^{-1}_{\Delta t}[n,\mathbf{g}]: T_{\mathbf{y}_{n}}^*V \rightarrow T_{\mathbf{y}_{n+1}}^*V$. Note that this method can be thought of as a one-step approximation of the adjoint system, since the continuous-time flow of the adjoint system is given by the cotangent lift of the flow of the state dynamics (as discussed in Section \ref{section:geometry-adjoints-review}). Also note that for backpropagation where the terminal value for the adjoint variable is specified, the second equation above is more naturally expressed as a map $\mathbf{p}_{n+1} \mapsto \mathbf{p}_n$ given by $\mathbf{p}_{n} = T^*\Phi_{\Delta t}[n,\mathbf{g}] \mathbf{p}_{n+1}.$


\begin{remark}
    We will refer to the \textit{cotangent lifted one-step method} as the combined integrator in both the $y$ and $p$ variables, as well as just the integrator in the $p$ variable (which depends on the $y$ variable); it will be clear in context which is meant. 
\end{remark}

It is well-known that integration of the state dynamics by a Runge--Kutta (RK) method and then adjoining is equivalent to first adjoining and then integrating by the associated symplectic partitioned RK method (see \cite{Sa2016, BoVa2006, TrLe2024}). We extend this result to all one-step time integration methods. 

\begin{theorem}\label{thm:time-int-adjoint-comm}
Time integration by a one-step method and adjoining commute, where time integration of the state ODE is given by a one-step method $\Phi_{\Delta t}[n,\mathbf{g}]$, time integration of the adjoint system is given by the cotangent lift of the one-step method, adjoining the ODE is given by the usual adjoint ODE system \cite{TrLe2024}, and the adjoint of the discrete system is as defined in \cite{TrLe2024}. That is, the following diagram commutes.
\begin{equation}
\begin{tikzcd}[column sep=20ex, row sep =10ex]\label{diagram:time-int-adjoint-comm}
	\dot{\mathbf{y}} = \mathbf{g}(t,\mathbf{y}) \arrow[r, "\text{Adjoint}"] \arrow[d, "\Phi_{\Delta t}{[n,\mathbf{g}]}"] & \begin{tabular}{c} $\dot{\mathbf{y}} = \mathbf{g}(t,\mathbf{y})$ \\ $\dot{\mathbf{p}} = -[D_\mathbf{y}\mathbf{g}(t,\mathbf{y})]^* \mathbf{p}$ \end{tabular} \arrow[d, "(\Phi_{\Delta t}{[n,\mathbf{g}]}{,}T^*\Phi^{-1}_{\Delta t}{[n,\mathbf{\mathbf{g}}]})"] \\ 
    \mathbf{y}_{n+1} = \Phi_{\Delta t}{[n,\mathbf{g}]}(\mathbf{y}_n) \arrow[r, "\text{Adjoint}"] & \begin{tabular}{c} $\mathbf{y}_{n+1} = \Phi_{\Delta t}{[n,\mathbf{g}]}(\mathbf{y}_n) $ \\ $\mathbf{p}_{n+1} = T^*\Phi^{-1}_{\Delta t}{[n,\mathbf{g}]}(\mathbf{p}_{n}) $ \end{tabular}
\end{tikzcd}
\end{equation}
    Furthermore, the cotangent lifted method is the unique one-step method $\mathbf{p}_{n} \mapsto \mathbf{p}_{n+1}$ satisfying
    \begin{equation}\label{eq:cotangent-lift-characterization}
        \langle \mathbf{p}_{n+1},\delta \mathbf{y}_{n+1} \rangle = \langle \mathbf{p}_{n}, \delta \mathbf{y}_n\rangle, 
    \end{equation}
    where $\delta \mathbf{y}_{n+1}$ solves the variational equation associated with the one-step method
    $$ \delta \mathbf{y}_{n+1} = T_{\mathbf{y}_n}\Phi_{\Delta t}[n,\mathbf{g}] \delta \mathbf{y}_n, $$
    for arbitrary $\delta \mathbf{y}_n$ and arbitrary right-hand-side of the ODE $g$. Furthermore, an alternative representation of the cotangent lifted one-step method is 
    \begin{equation}\label{eq:alternate-cotangent-lift-representation}
        \mathbf{p}_{n} = \left(\frac{\delta \mathbf{y}_{n+1}}{\delta \mathbf{y}_n}\right)^*\mathbf{p}_{n+1}.
    \end{equation}
\begin{proof}
    To prove the first statement, we have to show that the discrete adjoint system $\mathbf{y}_{n+1} = \Phi_{\Delta t}{[n,\mathbf{g}]}(\mathbf{y}_n)$ yield the method given by the cotangent lift of $\Phi_{\Delta t}{[n,\mathbf{g}]}$ applied to the continuous adjoint system. For a state variable $\mathbf{y}_n$ and an adjoint variable $\mathbf{p}_{n+1}$, we define the discrete action by 
    $$ \mathbb{S}(\mathbf{y}_n,\mathbf{p}_{n+1}) = \langle \mathbf{p}_{n+1},\Phi_{\Delta t}{[n,\mathbf{g}]}(\mathbf{y}_n)\rangle. $$
    The discrete adjoint system is then given by \cite{TrLe2024}
    \begin{align*}
        \mathbf{y}_{n+1} &= \frac{\delta}{\delta \mathbf{p}_{n+1}}\mathbb{S}(\mathbf{y}_n,\mathbf{p}_{n+1}), \\
        \mathbf{p}_{n} &= \frac{\delta}{\delta \mathbf{y}_n} \mathbb{S}(\mathbf{y}_n,\mathbf{p}_{n+1}).
    \end{align*}
    The first equation is simply the one-step method for the state ODE,
    $$\mathbf{y}_{n+1} = \frac{\delta}{\delta \mathbf{p}_{n+1}}\mathbb{S}(\mathbf{y}_n,\mathbf{p}_{n+1}) = \Phi_{\Delta t}{[n,\mathbf{g}]}(\mathbf{y}_n).$$
    The second equation can be computed, for an arbitrary variation $\delta\mathbf{y} \in T_{\mathbf{y}_n}V$ specified by a one-parameter family of curves $\mathbf{y}_n^\epsilon$ on $V$ such that $\mathbf{y}_n^0 = \mathbf{y}_n$ and $\frac{d}{d\epsilon}\Big|_{\epsilon=0}\mathbf{y}_n^\epsilon = \delta \mathbf{y}$,
    \begin{align*}
        \langle \mathbf{p}_{n}, \delta \mathbf{y}\rangle &= \frac{d}{d\epsilon}\Big|_{\epsilon = 0} \mathbb{S}(\mathbf{y}_n^\epsilon,\mathbf{p}_{n+1}) = \left\langle \mathbf{p}_{n+1}, \frac{d}{d\epsilon}\Big|_{\epsilon = 0}\Phi_{\Delta t}{[n,\mathbf{g}]}(\mathbf{y}_n^\epsilon)\right\rangle \\
        &= \langle \mathbf{p}_{n+1},T_{\mathbf{y}_n}\Phi_{\Delta t}{[n,\mathbf{g}]} \delta \mathbf{y}\rangle = \langle T^*_{\mathbf{y}_n}\Phi_{\Delta t}{[n,\mathbf{g}]} \mathbf{p}_{n+1}, \delta \mathbf{y}\rangle.
    \end{align*}
    Since this holds for any $\delta \mathbf{y} \in T_{\mathbf{y}_n}V$, the above equation is equivalent to
    $$ \mathbf{p}_{n} = T^*\Phi_{\Delta t}{[n,\mathbf{g}]} \mathbf{p}_{n+1}, $$
    or, equivalently,
    $$ \mathbf{p}_{n+1} = T^*\Phi^{-1}_{\Delta t}{[n,g]}\mathbf{p}_{n}, $$
    as was to be shown.

    For the second statement of the theorem that the cotangent lift of the one-step method satisfies equation \eqref{eq:cotangent-lift-characterization}, we use that the cotangent lifted method is given by $\mathbf{p}_{n} = T^*_{\mathbf{y}_n}\Phi_{\Delta t}{[n,\mathbf{g}]} \mathbf{p}_{n+1}$, from which,
        \begin{align*}
            \langle \mathbf{p}_{n}, \delta \mathbf{y}_n \rangle = \langle T^*_{\mathbf{y}_n}\Phi_{\Delta t}{[n,\mathbf{g}]} \mathbf{p}_{n+1}, \delta \mathbf{y}_n \rangle = \langle \mathbf{p}_{n+1}, T_{\mathbf{y}_n}\Phi_{\Delta t}{[n,\mathbf{g}]} \delta \mathbf{y}_n \rangle = \langle \mathbf{p}_{n+1}, \delta \mathbf{y}_{n+1}\rangle .
        \end{align*}
        
        For uniqueness, suppose there are two one-step methods $\mathbf{p}_{n} \mapsto \mathbf{p}_{n+1}$ and $\mathbf{p}_{n} \mapsto \tilde{\mathbf{p}}_{n+1}$ satisfying \eqref{eq:cotangent-lift-characterization}, i.e.,
        $$ \langle \mathbf{p}_{n+1}, \delta \mathbf{y}_{n+1}\rangle = \langle \mathbf{p}_{n}, \delta \mathbf{y}_n \rangle = \langle \tilde{\mathbf{p}}_{n+1}, \delta \mathbf{y}_{n+1}\rangle.  $$
        Using $\delta \mathbf{y}_{n+1} = T_{\mathbf{y}_n}\Phi_{\Delta t}{[n,\mathbf{g}]} \delta \mathbf{y}_n$, we have
        $$ \langle \mathbf{p}_{n+1},T_{\mathbf{y}_n}\Phi_{\Delta t}{[n,\mathbf{g}]} \delta \mathbf{y}_n\rangle = \langle \tilde{\mathbf{p}}_{n+1}, T_{\mathbf{y}_n}\Phi_{\Delta t}{[n,\mathbf{g}]} \delta \mathbf{y}_n\rangle. $$
        Equivalently,
        $$ \langle (T^*_{\mathbf{y}_n}\Phi_{\Delta t}{[n,\mathbf{g}]})(\mathbf{p}_{n+1} - \tilde{\mathbf{p}}_{n+1}), \delta \mathbf{y}_n \rangle = 0. $$
        Since $\delta \mathbf{y}_n$ is arbitrary, we have $(T^*_{\mathbf{y}_n}\Phi_{\Delta t}{[n,\mathbf{g}]})(\mathbf{p}_{n+1} - \tilde{\mathbf{p}}_{n+1}) = 0$. Finally, since the one-step method is a local diffeomorphism, the operator $T^*_{\mathbf{y}_n}\Phi_{\Delta t}{[n,\mathbf{g}]}$ has trivial kernel, and hence, $\mathbf{p}_{n+1} = \tilde{\mathbf{p}}_{n+1}$.

        Finally, the representation \eqref{eq:alternate-cotangent-lift-representation} follows from observing that since $\mathbf{y}_{n+1}$ is a function of $\mathbf{y}_n$, their variations are related by 
        \begin{equation}
            \delta \mathbf{y}_{n+1} = \frac{\delta \mathbf{y}_{n+1}}{\delta \mathbf{y}_n} \delta \mathbf{y}_n. \label{eq:tangent-transfer-rep}
        \end{equation}
        Substituting this into \eqref{eq:cotangent-lift-characterization} yields
        $$ \langle \mathbf{p}_{n}, \delta \mathbf{y}_n\rangle =   \langle \mathbf{p}_{n+1},\delta \mathbf{y}_{n+1} \rangle =  \left\langle \mathbf{p}_{n+1},\frac{\delta \mathbf{y}_{n+1}}{\delta \mathbf{y}_n} \delta \mathbf{y}_n\right\rangle = \left\langle \left(\frac{\delta \mathbf{y}_{n+1}}{\delta \mathbf{y}_n}\right)^*\mathbf{p}_{n+1}, \delta \mathbf{y}_n\right\rangle. $$
        Again, $\delta \mathbf{y}_n$ is arbitrary and thus, equation \eqref{eq:alternate-cotangent-lift-representation} holds.
\end{proof}
\end{theorem}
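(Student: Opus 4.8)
The plan is to split the theorem into its three assertions — commutativity of diagram~\eqref{diagram:time-int-adjoint-comm}, the conservation-law characterization of the cotangent-lifted integrator, and the alternative representation~\eqref{eq:alternate-cotangent-lift-representation} — and to handle all three through the generating-function description of discrete adjoint systems from~\cite{TrLe2024}.

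For commutativity, I would start from the discrete adjoint system associated with the one-step map $\mathbf{y}_{n+1} = \Phi_{\Delta t}[n,\mathbf{g}](\mathbf{y}_n)$. Following~\cite{TrLe2024}, this is the system generated by the discrete action $\mathbb{S}(\mathbf{y}_n,\mathbf{p}_{n+1}) := \langle \mathbf{p}_{n+1}, \Phi_{\Delta t}[n,\mathbf{g}](\mathbf{y}_n)\rangle$, with discrete Hamilton's equations $\mathbf{y}_{n+1} = \delta\mathbb{S}/\delta\mathbf{p}_{n+1}$ and $\mathbf{p}_n = \delta\mathbb{S}/\delta\mathbf{y}_n$. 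The first equation reproduces the state update verbatim. For the second, I would vary $\mathbb{S}$ along a one-parameter family $\mathbf{y}_n^\epsilon$ with $\frac{d}{d\epsilon}\big|_{\epsilon=0}\mathbf{y}_n^\epsilon = \delta\mathbf{y}$, obtaining $\langle \mathbf{p}_n, \delta\mathbf{y}\rangle = \langle \mathbf{p}_{n+1}, T_{\mathbf{y}_n}\Phi_{\Delta t}[n,\mathbf{g}]\,\delta\mathbf{y}\rangle = \langle T^*_{\mathbf{y}_n}\Phi_{\Delta t}[n,\mathbf{g}]\,\mathbf{p}_{n+1}, \delta\mathbf{y}\rangle$ by~\eqref{eq:dual-tangent}; since $\delta\mathbf{y}$ is arbitrary and the fiberwise pairing is non-degenerate, $\mathbf{p}_n = T^*_{\mathbf{y}_n}\Phi_{\Delta t}[n,\mathbf{g}]\,\mathbf{p}_{n+1}$, equivalently $\mathbf{p}_{n+1} = T^*\Phi^{-1}_{\Delta t}[n,\mathbf{g}]\,\mathbf{p}_n$. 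This is exactly the map on the right vertical arrow of~\eqref{diagram:time-int-adjoint-comm}, so the two ways around the square agree.

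For the characterization, I would first confirm that the cotangent-lifted method satisfies~\eqref{eq:cotangent-lift-characterization}: substituting $\mathbf{p}_n = T^*_{\mathbf{y}_n}\Phi_{\Delta t}[n,\mathbf{g}]\,\mathbf{p}_{n+1}$ and the variational update $\delta\mathbf{y}_{n+1} = T_{\mathbf{y}_n}\Phi_{\Delta t}[n,\mathbf{g}]\,\delta\mathbf{y}_n$ into $\langle \mathbf{p}_n, \delta\mathbf{y}_n\rangle$ and applying~\eqref{eq:dual-tangent} yields $\langle \mathbf{p}_{n+1}, \delta\mathbf{y}_{n+1}\rangle$. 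For uniqueness, suppose two one-step maps $\mathbf{p}_n \mapsto \mathbf{p}_{n+1}$ and $\mathbf{p}_n \mapsto \tilde{\mathbf{p}}_{n+1}$ both satisfy~\eqref{eq:cotangent-lift-characterization} with the same input $\mathbf{p}_n$; subtracting and using the variational update gives $\langle (T^*_{\mathbf{y}_n}\Phi_{\Delta t}[n,\mathbf{g}])(\mathbf{p}_{n+1}-\tilde{\mathbf{p}}_{n+1}), \delta\mathbf{y}_n\rangle = 0$ for all $\delta\mathbf{y}_n$, hence $(T^*_{\mathbf{y}_n}\Phi_{\Delta t}[n,\mathbf{g}])(\mathbf{p}_{n+1}-\tilde{\mathbf{p}}_{n+1}) = 0$; since $\Phi_{\Delta t}[n,\mathbf{g}]$ is a local diffeomorphism (by an implicit-function argument using differentiability of $\mathbf{g}$ and a small time step), $T_{\mathbf{y}_n}\Phi_{\Delta t}[n,\mathbf{g}]$ and its dual are invertible, forcing $\mathbf{p}_{n+1} = \tilde{\mathbf{p}}_{n+1}$. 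Finally,~\eqref{eq:alternate-cotangent-lift-representation} follows by inserting the variational relation~\eqref{eq:tangent-transfer-rep} into~\eqref{eq:cotangent-lift-characterization} and reading off the adjoint, again using that $\delta\mathbf{y}_n$ is arbitrary.

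The conceptual content here is small; the main obstacle is bookkeeping — faithfully importing the generating-function definition of the discrete adjoint system from~\cite{TrLe2024}, and keeping straight the distinction between the cotangent lift $T^*\Phi^{-1}$ (natural for forward integration, and the object that is manifestly symplectic) and its inverse $T^*\Phi$ (natural for backpropagation with a prescribed terminal condition on $\mathbf{p}$), while invoking the local-diffeomorphism hypothesis on $\Phi_{\Delta t}[n,\mathbf{g}]$ precisely where non-degeneracy of $T^*_{\mathbf{y}_n}\Phi_{\Delta t}[n,\mathbf{g}]$ is used. As a consistency check, specializing $\Phi_{\Delta t}[n,\mathbf{g}]$ to a partitioned Runge--Kutta map should recover the symplectic-PRK characterization of~\cite{Sa2016}.
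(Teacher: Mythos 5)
Your proposal is correct and follows essentially the same route as the paper's proof: the same generating-function construction $\mathbb{S}(\mathbf{y}_n,\mathbf{p}_{n+1}) = \langle \mathbf{p}_{n+1}, \Phi_{\Delta t}[n,\mathbf{g}](\mathbf{y}_n)\rangle$ with variation in $\mathbf{y}_n$ to obtain the cotangent-lifted update, the same verification-plus-arbitrary-$\delta\mathbf{y}_n$ argument for the conservation-law characterization with uniqueness via the trivial kernel of $T^*_{\mathbf{y}_n}\Phi_{\Delta t}[n,\mathbf{g}]$, and the same substitution of \eqref{eq:tangent-transfer-rep} for the alternative representation. No gaps.
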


Analogous to the discussion of semi-discretization in Section \ref{section:semi-discretization}, this commutative diagram leads to a non-trivial structural condition that an OtD method (where here discretize refers to one-step time integration) must satisfy in order to be equivalent to a DtO method; namely, the one-step method used in the OtD method must be a symplectic integrator and, in particular, equivalent to the cotangent lift of a one-step method (here, by equivalent, we mean that we are allowing for equivalent representations of the same time integration scheme, such as the concept of reducibility in the context of Runge--Kutta methods \cite{Hairer.2006}).

The previous theorem shows that the cotangent lifted method is the unique one-step method covering the one-step method for the state dynamics and satisfying the adjoint quadratic conservation law \eqref{eq:cotangent-lift-characterization}. This explains the observed discrepancy in the literature for discrete gradients produced by DtO and OtD methods for ODEs, as discussed in Section \ref{section:DtO-vs-OtD}, since utilizing a time integration scheme in an OtD method which is not the cotangent lifted method cannot satisfy the discrete adjoint quadratic conservation law and hence, cannot produce exact discrete gradients.

\textbf{Order of the Cotangent Lifted Method.} In the context of adjoint sensitivity analysis, since the conservation law \eqref{eq:cotangent-lift-characterization} holds, the cotangent lifted method produces the exact discrete gradient \cite{Sa2016} for the discrete minimization problem
$$ \min_{\mathbf{y}^*} C\, (\mathbf{y}_N) \text{ such that } \mathbf{y}_{n+1} = \Phi_{\Delta t}{[n,\mathbf{g}]}(\mathbf{y}_n), n =0,\dots,N-1,\ \mathbf{y}_0 = \mathbf{y}^*, $$
where $N$ is the index corresponding to $t_f$. We now address the question of adjoint consistency, i.e., how well the discrete gradient produced from the cotangent lifted method approximates the continuous-time gradient, corresponding to the continuous-time minimization problem
\begin{align*}
    \min_{\mathbf{y}^*} C\, (\mathbf{y}(t_f)) \text{ such that } \dot{\mathbf{y}}(t) = \mathbf{g}(t,\mathbf{y}), \mathbf{y}(0) = \mathbf{y}^*.
\end{align*}

For this section, we will assume that $V$ is a finite-dimensional vector space with norm $\|\cdot\|_V$. In essence, we would like to transfer the order of accuracy of the one-step method to the order of accuracy of its cotangent lift. To do this, we will need an additional assumption on the one-step method; namely, that taking variations and applying the one-step method commute. To be more precise, we introduce the following definitions.

The \textit{variational system} associated with the ODE $\dot{\mathbf{y}} = \mathbf{g}(t,\mathbf{y})$ on $V$ is given by the ODE together with its variational equation, i.e.,
\begin{align} \label{eq:variational-system-cont}
    \frac{d}{dt} \begin{pmatrix} \mathbf{y} \\ \delta \mathbf{y} \end{pmatrix} &= \begin{pmatrix} \mathbf{g}(t,\mathbf{y}) \\ D_\mathbf{y}\mathbf{g}(t,\mathbf{y})\delta \mathbf{y} \end{pmatrix} =: \widetilde{\mathbf{g}} \left(t, \begin{pmatrix} \mathbf{y} \\ \delta \mathbf{y} \end{pmatrix} \right).
\end{align}
We denote the right hand side of the variational system as $\widetilde{g}$, viewed as an ODE on $V \times V$. 

Let $\Phi_{\Delta t}[\cdot,\cdot]$ be a one-step method. The \textit{variational system} associated with the one-step method is given by the one-step method together with its variational equation, i.e., 
\begin{subequations}\label{eq:variational-system-one-step}
\begin{align}
    \mathbf{y}_{n+1} &= \Phi_{\Delta t}{[n,\mathbf{g}]}(\mathbf{y}_n), \label{eq:variational-system-one-step-a}\\
    \delta \mathbf{y}_{n+1} &= T_{\mathbf{y}_n} \Phi_{\Delta t}{[n,\mathbf{g}]} \delta \mathbf{y}_n. \label{eq:variational-system-one-step-b}
\end{align}
\end{subequations}
We say that the one-step method is \textit{variationally equivariant} if the one-step method applied to the variational system associated with the ODE \eqref{eq:variational-system-cont} is the same as the variational system associated with the one-step method \eqref{eq:variational-system-one-step}. That is,
$$ \Phi_{\Delta t}[n, \widetilde{\mathbf{g}}] \begin{pmatrix} \mathbf{y}_n \\ \delta \mathbf{y}_n \end{pmatrix} = \begin{pmatrix} \Phi_{\Delta t}{[n,\mathbf{g}]}(\mathbf{y}_n) \\ T_{\mathbf{y}_n} \Phi_{\Delta t}{[n,\mathbf{g}]} \delta \mathbf{y}_n \end{pmatrix}. $$
Informally, a one-step method is variationally equivariant if applying the one-step method and taking variations commute. For example, this is true for Runge--Kutta methods (RK) \cite{Hairer.2006}, more generally for Generalized Additive Runge--Kutta methods (GARK) \cite{NaRoSa2020}, and exponential Runge--Kutta methods.

We can now relate the order of accuracy for the discrete gradients obtained from the cotangent lifted method to the order of accuracy of the method for the state dynamics. The following analysis closely mirrors the corresponding result for RK and GARK schemes \cite{Sa2006, NaRoSa2020}.

Define the \textit{solution sensitivity matrix} associated with an ODE $\dot{\mathbf{y}} = \mathbf{g}(t,\mathbf{y})$ on $V$ as
$$ S_{t_2,t_1}(\mathbf{y}(t_1)) = \frac{\delta \mathbf{y}(t_2)}{\delta \mathbf{y}(t_1)},\ t_2 \geq t_1,$$
where $\mathbf{y}(t)$ is the exact solution of the ODE. Note we view the solution sensitivity matrix as a linear mapping $S_{t_2,t_1}(\mathbf{y}(t_1)) : V \rightarrow V$. 

\begin{prop}\label{prop:cotangent-order}
    Let $\Phi_{\Delta t}$ be a variationally equivariant one-step method. Assume that the ODE $\dot{\mathbf{y}} = \mathbf{g}(t,\mathbf{y})$ is smooth and has a smooth solution. Furthermore, assume sufficient regularity such that the solution sensitivity matrix and the derivative of the cost function are Lipschitz,
    \begin{align}
        \|S_{t_2,t_1}(\mathbf{y}) - S_{t_2,t_1}(\mathbf{y}')\|_{op} &\leq c \|\mathbf{y}-\mathbf{y}'\|_V, \label{eq:solution-estimate} \\
        \|D\text{C}\,(\mathbf{y}) - D\text{C}\,(\mathbf{y}')\|_{V^*} &\leq c \|\mathbf{y}-\mathbf{y}'\|_V, \label{eq:gradient-estimate}
    \end{align}
     for all $\mathbf{y},\mathbf{y}' \in V$ and $t_2 \geq t_1$ (where $\|\cdot\|_V$ denotes a norm on $V$, $\|\cdot\|_{V^*}$ the associated dual norm, and $\|\cdot\|_{op}$ the operator norm; this result is independent of the norm since $V$ is finite-dimensional.) Assume that the one-step method $\mathbf{y}_{n+1} = \Phi_{\Delta t}{[n,\mathbf{g}]}(\mathbf{y}_n)$ converges with order $r$,
    $$ \mathbf{y}_n - \mathbf{y}(t_n) = \mathcal{O}(\Delta t^r).$$
    Then, the cotangent lifted method $\mathbf{p}_{n} = T^*_{\mathbf{y}_n}\Phi_{\Delta t}{[n,\mathbf{g}]}\mathbf{p}_{n+1}$ with terminal condition $\mathbf{p}_{n} = DC(\mathbf{y}_n)$ approximates the solution of the continuous-time adjoint equation with order $r$
    $$ \mathbf{p}_{n} - \mathbf{p}(t_n) = \mathcal{O}(\Delta t^r), $$
    where $\mathbf{p}(t)$ is the exact solution of
    $$ \dot{\mathbf{p}} = -[D\mathbf{g}(t,\mathbf{y})]\mathbf{p},\ \mathbf{p}(t_f) = DC\,(\mathbf{y}(t_f)). $$
    \begin{proof}
        The solution $\delta \mathbf{y}(t)$ for the second component of the continuous-time variational system \eqref{eq:variational-system-cont} is simply propagated by the solution sensitivity matrix, $\delta \mathbf{y}(t_2) = S_{t_2,t_1}(\mathbf{y}(t_1)) \delta \mathbf{y}(t_1)$ \cite{NaRoSa2020}.

        At the discrete level, the solution of the variational equation associated with the one-step method $\delta \mathbf{y}_{n+1} = T_{\mathbf{y}_n} \Phi_{\Delta t}{[n,\mathbf{g}]} \delta \mathbf{y}_n$, from equation \eqref{eq:tangent-transfer-rep}, can be represented $\delta \mathbf{y}_{n_2} = S^{\Delta t}_{n_2,n_1}(\mathbf{y}_{n_1}) \delta \mathbf{y}_{n_1}$, where $S^{\Delta t}_{n_2,n_1}(\mathbf{y}_{n_1})$ is the numerical solution sensitivity matrix
        $$ S^{\Delta t}_{n_2,n_1}(\mathbf{y}_{n_1}) = \frac{\delta \mathbf{y}_{n_2}}{\delta \mathbf{y}_{n_1}}.$$
        By assumption, the one-step method is variationally equivariant, so the solution $\{\mathbf{y}_n, \delta \mathbf{y}_n\}$ of the variational system associated with the one-step method is equivalently given by the one-step method applied to the continuous-time variational system. In particular, this solution then inherits the order of accuracy of the one-step method. Furthermore, the stability for $\{\delta \mathbf{y}_n\}$ is precisely the linear stability of the one-step method. Thus, we have convergence of order $r$ for $\{\delta \mathbf{y}_n\}$,
        $$ \delta \mathbf{y}_n = \delta \mathbf{y}(t_n) + \mathcal{O}(\Delta t^r). $$
        In particular, this implies that the numerical sensitivity matrix approximates the continuous sensitivity matrix to order $r$, since for arbitrary $\mathbf{v} \in V$ and initial condition $\delta \mathbf{y}_{n_1} = \mathbf{v} = \delta \mathbf{y}(t_1)$, we have
        $$ \mathcal{O}(\Delta t^r) = \delta \mathbf{y}(t_2) - \delta \mathbf{y}_{n_2} = \Big( S_{t_2,t_1}(\mathbf{y}(t_1)) - S^{\Delta t}_{n_2,n_1} (\mathbf{y}(t_1))\Big) \mathbf{v}.$$
        Thus,
        $$ S_{t_2,t_1}(\mathbf{y}) - S^{\Delta t}_{n_2,n_1} (\mathbf{y}) = \mathcal{O}(\Delta t^r).$$ From Theorem \eqref{thm:time-int-adjoint-comm}, we know that 
        $$ \mathbf{p}_{n} = \left( \frac{\delta \mathbf{y}_{n+1}}{\delta \mathbf{y}_n}\right)^*\mathbf{p}_{n+1},$$ and hence
        $$ \mathbf{p}_{n} = \left(S^{\Delta t}_{N,n}\right)^* \mathbf{p}_{n} = \left(S^{\Delta t}_{N,n}(\mathbf{y}_n) \right)^* DC (\mathbf{y}_n), $$
        whereas the continuous-time adjoint variable similarly satisfies
        $$ \mathbf{p}(t_n) = \frac{\delta C(\mathbf{y}(t_f))}{\delta \mathbf{y}(t_n)} = \left(S^{\Delta t}_{t_f,t_n}(\mathbf{y}(t_n))\right)^*DC(\mathbf{y}(t_f)).$$
        Combining the above, we obtain
        \begin{align*}
            \mathbf{p}_{n} - \mathbf{p}(t_n) &= \Big(S^{\Delta t}_{N,n}(\mathbf{y}_n) \Big)^* DC (\mathbf{y}_n) - \Big(S_{t_f,t_n}(\mathbf{y}(t_n))\Big)^*DC(\mathbf{y}(t_f)) \\
            &= \Big(S^{\Delta t}_{N,n}(\mathbf{y}_n) \Big)^* (DC (\mathbf{y}_n) - DC(\mathbf{y}(t_f))) \\
            & \qquad + \Big(S^{\Delta t}_{N,n}(\mathbf{y}_n) - S_{t_f,t_n} (\mathbf{y}_n)\Big)^* DC (\mathbf{y}(t_f)) + \Big(S_{t_f,t_n}(\mathbf{y}_n) - S_{t_f,t_n}(\mathbf{y}(t_n)) \Big)^* DC(\mathbf{y}(t_f)).
        \end{align*}
        The result now immediately follows.
    \end{proof}
\end{prop}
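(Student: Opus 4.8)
The plan is to represent both the continuous adjoint variable $\mathbf{p}(t_n)$ and the discrete adjoint variable $\mathbf{p}_n$ in terms of solution sensitivity matrices, reduce the claim to an order-$r$ estimate on these matrices, and then close the argument with the Lipschitz hypotheses \eqref{eq:solution-estimate}--\eqref{eq:gradient-estimate} and the order-$r$ convergence of the base method. The argument closely parallels the known Runge--Kutta and GARK analyses.

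First I would record the two representation formulas. On the continuous side, the adjoint-variational conservation law (Proposition \ref{prop:ManifoldQuadraticInvariant}) applied between times $t_n$ and $t_f$ gives $\langle \mathbf{p}(t_n), \delta\mathbf{y}(t_n)\rangle = \langle DC(\mathbf{y}(t_f)), S_{t_f,t_n}(\mathbf{y}(t_n))\,\delta\mathbf{y}(t_n)\rangle$ for every $\delta\mathbf{y}(t_n)$, whence $\mathbf{p}(t_n) = (S_{t_f,t_n}(\mathbf{y}(t_n)))^* DC(\mathbf{y}(t_f))$. On the discrete side, iterating the alternative representation \eqref{eq:alternate-cotangent-lift-representation} of the cotangent-lifted method from step $n$ up to step $N$, and using the cocycle identity $\delta\mathbf{y}_{n_2}/\delta\mathbf{y}_{n_1} = S^{\Delta t}_{n_2,n_1}(\mathbf{y}_{n_1})$, yields $\mathbf{p}_n = (S^{\Delta t}_{N,n}(\mathbf{y}_n))^* DC(\mathbf{y}_N)$ with terminal condition $\mathbf{p}_N = DC(\mathbf{y}_N)$.

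Next I would show that the numerical sensitivity matrix approximates the exact one to order $r$, uniformly over starting values in a compact set. This is the step where variational equivariance is used in an essential way: by hypothesis the one-step method applied to the variational ODE $\frac{d}{dt}\widetilde{\mathbf{y}} = \widetilde{\mathbf{g}}(t,\widetilde{\mathbf{y}})$ of \eqref{eq:variational-system-cont} coincides with the variational system \eqref{eq:variational-system-one-step} of the one-step method, so $\{(\mathbf{y}_n,\delta\mathbf{y}_n)\}$ is genuinely the one-step method applied to a smooth ODE. Its order-$r$ convergence --- with the $\delta\mathbf{y}$-component stable by the linear stability of the method --- gives $\delta\mathbf{y}_n = \delta\mathbf{y}(t_n) + \mathcal{O}(\Delta t^r)$; running this with the standard basis vectors as initial data $\delta\mathbf{y}_{n_1}$, and noting the convergence is uniform over compacta by smoothness, produces $S^{\Delta t}_{n_2,n_1}(\mathbf{y}) - S_{t_2,t_1}(\mathbf{y}) = \mathcal{O}(\Delta t^r)$ for $\mathbf{y}$ in a fixed compact set containing all the $\mathbf{y}_n$ and $\mathbf{y}(t_n)$.

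Finally I would combine everything via the telescoping decomposition
\begin{align*}
\mathbf{p}_n - \mathbf{p}(t_n) &= \big(S^{\Delta t}_{N,n}(\mathbf{y}_n)\big)^*\big(DC(\mathbf{y}_N) - DC(\mathbf{y}(t_f))\big) \\
&\quad + \big(S^{\Delta t}_{N,n}(\mathbf{y}_n) - S_{t_f,t_n}(\mathbf{y}_n)\big)^* DC(\mathbf{y}(t_f)) \\
&\quad + \big(S_{t_f,t_n}(\mathbf{y}_n) - S_{t_f,t_n}(\mathbf{y}(t_n))\big)^* DC(\mathbf{y}(t_f)),
\end{align*}
where the first term is $\mathcal{O}(\Delta t^r)$ by boundedness of $S^{\Delta t}$, the Lipschitz bound \eqref{eq:gradient-estimate}, and $\mathbf{y}_N - \mathbf{y}(t_f) = \mathcal{O}(\Delta t^r)$; the second term is $\mathcal{O}(\Delta t^r)$ by the sensitivity-matrix estimate just established (evaluated at $\mathbf{y}_n$); and the third term is $\mathcal{O}(\Delta t^r)$ by the Lipschitz bound \eqref{eq:solution-estimate} together with $\mathbf{y}_n - \mathbf{y}(t_n) = \mathcal{O}(\Delta t^r)$. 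I expect the main obstacle to be the sensitivity-matrix estimate: absent variational equivariance, the iterated linearizations $T_{\mathbf{y}_n}\Phi_{\Delta t}[n,\mathbf{g}]$ compute the linearization of the numerical flow rather than the numerical flow of the linearized ODE, so there is no direct convergence theorem to invoke; one must also be careful that the convergence of $\delta\mathbf{y}_n$ is uniform in the base point, so that the estimate may be read off at the perturbed argument $\mathbf{y}_n$ rather than at the exact $\mathbf{y}(t_n)$.
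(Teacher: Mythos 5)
Your proposal is correct and follows essentially the same route as the paper's own proof: the same sensitivity-matrix representations of $\mathbf{p}_n$ and $\mathbf{p}(t_n)$, the same use of variational equivariance to transfer the order $r$ to the numerical sensitivity matrix, and the identical three-term telescoping decomposition closed by the Lipschitz hypotheses \eqref{eq:solution-estimate}--\eqref{eq:gradient-estimate}. If anything, you are slightly more careful than the paper in flagging that the sensitivity estimate must hold uniformly over a compact set so it can be evaluated at the numerical iterate $\mathbf{y}_n$ rather than at $\mathbf{y}(t_n)$.
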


To prove the above proposition, we used variational equivariance to transfer the order of the one-step method to the order of the linearization of the one-step method, which is equivalently the one-step method of the linearization. Note that variational equivariance is a sufficient condition to be able to transfer the order, i.e., variational equivariance is a sufficient condition for adjoint consistency of the cotangent lifted method. However, even without a variationally equivariant method, if one knows that the linearization of the one-step method retains the order of the one-step method, then the preceding result still follows. The contrapositive implies that if we do not have adjoint consistency, then the one-step method is not variationally equivariant, as the following example shows.

\begin{example}
As a counterexample of where the above proposition fails without variational equivariance, in \cite{AlSa2009}, it is shown that discrete adjoints for adaptive time-stepping methods do not retain the consistency order of the method for the state dynamics, leading in the worst case to $\mathcal{O}(1)$ errors in the gradient, i.e., the discrete adjoint of an adaptive time-stepping method is not adjoint consistent. From our perspective, we can understand this from the fact that an adaptive time-stepping method is not variationally equivariant, even if the underlying one-step method is. 

To see this explicitly, consider the time-independent ODE $\dot{\mathbf{y}}(t) = \mathbf{g}(t,\mathbf{y}(t)), \mathbf{y}(t_0)=\mathbf{y}_0$ expressed in time-independent form as
\begin{align}\label{eq:time-ind-form-ode}
    \frac{d}{dt} \begin{pmatrix} \mathbf{y}(t) \\ s(t) \end{pmatrix} = \begin{pmatrix} \mathbf{g}(s(t), \mathbf{y}(t)) \\ 1 \end{pmatrix},\ \begin{pmatrix} \mathbf{y}(t_0) \\ s(t_0) \end{pmatrix} = \begin{pmatrix} \mathbf{y}_0 \\ t_0 \end{pmatrix}. 
\end{align}
For simplicity in presentation, we consider an adaptive forward Euler method applied to the above system, although a similar discussion follows similarly for other one-step methods. Note that the underlying forward Euler method is variationally equivariant, but, as we will see, the adaptive method is not. Applied to the above system, the adaptive forward Euler method is
\begin{subequations}\label{eq:adaptive-euler}
\begin{align}\label{eq:adaptive-euler-a}
    \begin{pmatrix} \mathbf{y}_{n+1} \\ s_{n+1} \end{pmatrix} &= \begin{pmatrix} \mathbf{y}_n \\ s_n \end{pmatrix} + h_n \begin{pmatrix} \mathbf{g}(s_n, \mathbf{y}_n) \\ 1 \end{pmatrix}, \\
    h_{n+1} &= S(\mathbf{y}_n, h_n, s_n), \label{eq:adaptive-euler-b}
\end{align}
\end{subequations}
where $S$ is a step-size controller, which we assume to be differentiable. The linearization of this method is then
\begin{subequations}\label{eq:adaptive-euler-var}
\begin{align}\label{eq:adaptive-euler-var-a}
    \begin{pmatrix} \delta \mathbf{y}_{n+1} \\ \delta s_{n+1} \end{pmatrix} &= \begin{pmatrix} \delta \mathbf{y}_n \\ \delta s_n \end{pmatrix} + h_n \begin{pmatrix} D_1\mathbf{g}(s_n, \mathbf{y}_n) \delta s_n + D_2\mathbf{g}(s_n,\mathbf{y}_n)\delta \mathbf{y}_n \\ 0 \end{pmatrix} + \delta h_n \begin{pmatrix} \mathbf{g}(s_n, \mathbf{y}_n) \\ 1 \end{pmatrix}, \\
    \delta h_{n+1} &= D_1S(\mathbf{y}_n, h_n, s_n) \delta \mathbf{y}_n + D_2S(\mathbf{y}_n, h_n, s_n)\delta h_n + D_3S(\mathbf{y}_n, h_n, s_n)\delta s_n. \label{eq:adaptive-euler-var-b}
\end{align}
\end{subequations}
Equations \eqref{eq:adaptive-euler} and \eqref{eq:adaptive-euler-var} form the variational system associated with the adaptive Euler method. 
To see that the method is not variationally equivariant, we reverse this process; we first linearize \eqref{eq:time-ind-form-ode} and subsequently, apply the adaptive time-stepping method. The linearization of \eqref{eq:time-ind-form-ode} is 
\begin{align} \label{eq:time-ind-form-variational-ode}
    \frac{d}{dt} \begin{pmatrix} \delta \mathbf{y}(t) \\ \delta s(t) \end{pmatrix} = \begin{pmatrix} D_1\mathbf{g}(s(t), \mathbf{y}(t))\delta s(t) + D_2\mathbf{g}(s(t),y(t))\delta \mathbf{y}(t) \\ 0 \end{pmatrix}.
\end{align}
We then apply the adaptive Euler method to the continuous-time variational system \eqref{eq:time-ind-form-ode}, \eqref{eq:time-ind-form-variational-ode}, which yields
\begin{subequations}\label{eq:adaptive-euler-of-var}
\begin{align} \label{eq:adaptive-euler-of-var-a}
    \begin{pmatrix} \mathbf{y}_{n+1} \\ s_{n+1} \\ \delta \mathbf{y}_{n+1} \\ \delta s_{n+1} \end{pmatrix} &= \begin{pmatrix} \mathbf{y}_n \\ s_n \\ \delta \mathbf{y}_n \\ \delta s_n \end{pmatrix} + h_n \begin{pmatrix} \mathbf{g}(s_n, \mathbf{y}_n) \\ 1 \\ D_1\mathbf{g}(s_n, \mathbf{y}_n) \delta s_n + D_2\mathbf{g}(s_n,\mathbf{y}_n)\delta \mathbf{y}_n \\ 0 \end{pmatrix}, \\
    h_{n+1} &= S(\mathbf{y}_n, h_n, s_n). \label{eq:adaptive-euler-of-var-b}
\end{align}
\end{subequations}
Comparing the variational system of the adaptive method, \eqref{eq:adaptive-euler} and \eqref{eq:adaptive-euler-var}, to the adaptive method applied to the continuous-time variational system, \eqref{eq:adaptive-euler-of-var}, we see that these are not equivalent and hence, the adaptive method is not variationally equivariant. More precisely, the lack of variational equivariance arises from the $\delta h_n$ term in equation \eqref{eq:adaptive-euler-var-a}. We only have variational equivariance when $\delta h_n = 0$, i.e., when $D_1S = 0, D_2S = 0, D_3S = 0$, i.e., there is no adaptive time-stepping. The terms which break the variational equivariance, namely, the derivatives of $S$, are precisely the terms which lead to adjoint inconsistency as shown in \cite{AlSa2009}.
\end{example}

In this section, we considered one-step methods and their associated cotangent lifts. One can ask what happens in the case of multi-step methods. In \cite{Sa2007}, it was shown that the discrete adjoint associated with a multistep method does not generally retain the consistency order of the method used to discretize the state dynamics, both due to error in the backpropagation as well as initialization error for the terminal condition. From our geometric perspective, one-step methods are natural to consider since, at each time-step, a one-step method defines a map $V \rightarrow V$ and hence, one can naturally define its cotangent lift. On the other hand, for a multistep method, at each time step, it defines a map $V \times \dots \times V \rightarrow V$ for which there is no immediately obvious notion of cotangent lift. This can be understood more generally from the fact that a general linear method cannot be symplectic, and thus not a cotangent lift, unless it reduces to a one-step method \cite{BuHe2009}. 

We will now combine the discussions of semi-discretization and time integration together.

\subsection{Naturality of the Full Discretization}\label{section:naturality}

As we have seen, semi-discretization of the evolution equation induces a dual semi-discretization for the corresponding adjoint equation. Furthermore, time integration of the semi-discrete evolution equation induces a dual time integration of the semi-discrete adjoint equation. 

Consider the case of Galerkin semi-discretization. We would like to connect the semi-discretization and time integration commutative diagrams together. To do this, first note, as alluded to in Section \ref{section:semi-discretization}, the process of adjoining at the semi-discrete level depends on the (representation of the) duality pairing. This is of course also true for adjoining at the fully discrete level, since the cotangent lift of a map depends on the duality pairing, equation \eqref{eq:dual-tangent}. 

Combining the discussions of Section \ref{section:semi-discretization} and Section \ref{section:time-integration}, we arrive at the following commutative diagram.
\begin{equation}\label{diagram:full-discrete-adjoint-comm}
\begin{tikzcd}[column sep=14ex, row sep =10ex]
	\dot{q} = g \arrow[rr, "\text{Adjoint}"] \arrow[d, "S"] & {} & \begin{tabular}{c} $\dot{q} = g$ \\ $\dot{p} = -[D_qg]^*p$ \end{tabular} \arrow[d,"(S{,}S^*)"] \\ 
    \dot{\mathbf{q}} = M^{-1}\mathbf{g} \arrow[r, "\text{Adjoint}_S"]  \arrow[rr, "\text{Adjoint}_M", bend left = 20] \arrow[d,"\Phi_{\Delta t}"] & \begin{tabular}{c} $\dot{\mathbf{q}} = M^{-1} \mathbf{g}$ \\ $\dot{\mathbf{z}} = -[D_{\mathbf{q}}\mathbf{g}]^TM^{-T}\mathbf{z}$ \end{tabular} \arrow[d,"(\Phi_{\Delta t}{,}T^{*S}\Phi_{\Delta t}^{-1})"] \arrow[r, "\mathbf{z} = M^T\mathbf{p}", shift left] &  \begin{tabular}{c} $\dot{\mathbf{q}} = M^{-1}\mathbf{g}$ \\ $\dot{\mathbf{p}} = - M^{-T}[D_{\mathbf{q}}\mathbf{g}]^T\mathbf{p}$ \end{tabular} \arrow[l, shift left] \arrow[d,"(\Phi_{\Delta t}{,}T^{*M}\Phi_{\Delta t}^{-1})"] \\
    \mathbf{q}_{n+1} = \Phi_{\Delta t}(\mathbf{q}_n) \arrow[r, "\text{Adjoint}_S"] \arrow[rr, "\text{Adjoint}_M", bend right = 20]& \begin{tabular}{c} $\mathbf{q}_{n+1} = \Phi_{\Delta t}(\mathbf{q}_n)$  \\ $\mathbf{z}_{n+1} = T^{*S}\Phi^{-1}_{\Delta t}(\mathbf{z}_n)$ \end{tabular} \arrow[r, "\text{similarity}", shift left] & \begin{tabular}{c} $\mathbf{q}_{n+1} = \Phi_{\Delta t}(\mathbf{q}_n)$  \\ $\mathbf{p}_{n+1} = T^{*M}\Phi^{-1}_{\Delta t}(\mathbf{p}_n)$ \arrow[l, shift left] \end{tabular} 
\end{tikzcd}
\end{equation}
In the above diagram, we again denote for brevity
\begin{align*}
    g &= Aq + f(t,q), \\
    \mathbf{g} &= K \mathbf{q} + \mathbf{f}(t,\mathbf{q}),
\end{align*}
and have suppressed the additional arguments of the one-step method, but they are given by the corresponding semi-discrete ODE. Other than the arrows denoted ``similarity" in the above diagram, we have already explained all of the elements of the diagram. The equivalence of the two vertices connected by the ``similarity" arrows is the statement that they are related by a similarity transformation $T^{*M}\Phi^{-1}_{\Delta t} = M^{-T}T^{*S}\Phi^{-1}_{\Delta t} M^T$ which immediately follows from the definition of the cotangent lift, equation \eqref{eq:dual-tangent}. 

We also note that by Theorems \ref{thm:semi-disc-galerkin-comm} and \ref{thm:time-int-adjoint-comm}, the semi-discretization and time integration of adjoint systems are uniquely characterized by their respective conservation properties. Thus, to concisely summarize the results derived in this paper, we append these to the diagram.
\begin{equation}\label{diagram:full-discrete-adjoint-comm-cons}
\adjustbox{scale=0.75,center}{
\begin{tikzcd}[column sep=14ex, row sep =10ex] 
	\dot{q} = g \arrow[rr, "\text{Adjoint}"] \arrow[d, "S"] & {} & \begin{tabular}{c} $\dot{q} = g$ \\ $\dot{p} = -[D_qg]^*p$ \end{tabular} \arrow[d,"(S{,}S^*)"] & \arrow[dash, dashed]{l}\frac{d}{dt} \langle p,\delta q\rangle = 0 \\ 
    \dot{\mathbf{q}} = M^{-1}\mathbf{g} \arrow[r, "\text{Adjoint}_S"]  \arrow[rr, "\text{Adjoint}_M", bend left = 20] \arrow[d,"\Phi_{\Delta t}"] & \begin{tabular}{c} $\dot{\mathbf{q}} = M^{-1} \mathbf{g}$ \\ $\dot{\mathbf{z}} = -[D_{\mathbf{q}}\mathbf{g}]^TM^{-T}\mathbf{z}$ \end{tabular} \arrow[d,"(\Phi_{\Delta t}{,}T^{*S}\Phi_{\Delta t}^{-1})"] \arrow[r, "\mathbf{z} = M^T\mathbf{p}", shift left] &  \begin{tabular}{c} $\dot{\mathbf{q}} = M^{-1}\mathbf{g}$ \\ $\dot{\mathbf{p}} = -M^{-T}[D_{\mathbf{q}}\mathbf{g}]^T\mathbf{p}$ \end{tabular} \arrow[l, shift left] \arrow[d,"(\Phi_{\Delta t}{,}T^{*M}\Phi_{\Delta t}^{-1})"] & \arrow[dash, dashed]{l}\frac{d}{dt} \langle \mathbf{p},\delta \mathbf{q}\rangle_M = 0 \\
    \mathbf{q}_{n+1} = \Phi_{\Delta t}(\mathbf{q}_n) \arrow[r, "\text{Adjoint}_S"] \arrow[rr, "\text{Adjoint}_M", bend right = 20]& \begin{tabular}{c} $\mathbf{q}_{n+1} = \Phi_{\Delta t}(\mathbf{q}_n)$  \\ $\mathbf{z}_{n+1} = T^{*S}\Phi^{-1}_{\Delta t}(\mathbf{z}_n)$ \end{tabular} \arrow[r, "\text{similarity}", shift left] &  \arrow[l, shift left] \begin{tabular}{c} $\mathbf{q}_{n+1} = \Phi_{\Delta t}(\mathbf{q}_n)$  \\ $\mathbf{p}_{n+1} = T^{*M}\Phi^{-1}_{\Delta t}(\mathbf{p}_n)$ \end{tabular} & \arrow[dash, dashed]{l} \begin{tabular}{c} $\langle \mathbf{p}_{n+1}, \delta \mathbf{q}_{n+1}\rangle_M$ \\ $= \langle \mathbf{p}_{n}, \delta \mathbf{q}_{n}\rangle_M$ \end{tabular}
\end{tikzcd}. }
\end{equation}

\subsection{Computational Implications} 
We conclude with several remarks on the computational implications of our results. Note that in the diagram \eqref{diagram:full-discrete-adjoint-comm-cons} the arrows marked ``similarity" represent a weaker form of equivalence in the setting of floating point arithmetic. Namely, although they are equivalent assuming exact arithmetic, they will in general not be equivalent with floating point arithmetic; in particular, if $M$ is poorly conditioned or solved indirectly, e.g. using iterative methods as is standard in non-DG FEM discretizations, the two fully discrete methods may produce different results. Also, we mention in Remark \ref{rmk:eq-preservation} that time integration of the semi-discrete adjoint system induced by different duality pairings may lead to different equilibrium characteristics of the discrete solution. Thus, the choice of duality pairing for forming the adjoint system can affect the fully discrete system. 

To elaborate further on the discrepancy between the two vertices related by a similarity transformation, let us focus on the bottom loop of the above diagram and consider a more general method-of-lines semi-discretization, as defined in Section \ref{section:semi-discretization}, and ask generally what is the result of using two different duality pairings. Consider two duality pairings $\langle\cdot,\cdot\rangle_{L}$ and $\langle \cdot,\cdot\rangle_R$ on $X_h \times X_h^*$ which are related by an invertible operator $P$ via
$$ \langle \mathbf{w}, \mathbf{v}\rangle_{L} = \langle \mathbf{w}, P \mathbf{v}\rangle_R. $$
Now, consider a method-of-lines semi-discretization $ \dot{\mathbf{q}} = K_h(\mathbf{q})$ and its time integration by a one-step method $\mathbf{q}_{n+1} = \Phi_{\Delta t}(\mathbf{q})$, where for brevity we denote $K_h = K_h[A,f]$ and $\Phi_{\Delta t}= \Phi_{\Delta t}[n,K_h[A,f]].$
The bottom loop of diagram \eqref{diagram:full-discrete-adjoint-comm} is a special case of the following commutative diagram,
\begin{equation}\label{diagram:preconditioning}
    \begin{tikzcd}[column sep=14ex, row sep =10ex]
        {} & \mathbf{q}_{n+1} = \Phi_{\Delta t}(\mathbf{q}) \arrow[dl, "\text{Adjoint}_L"'] \arrow[dr, "\text{Adjoint}_R"]& {} \\
        \begin{tabular}{c} $\mathbf{q}_{n+1} = \Phi_{\Delta t}(\mathbf{q}_n)$  \\ $\mathbf{z}_{n+1}^L = T^{*L}\Phi^{-1}_{\Delta t}(\mathbf{z}^L_n)$ \end{tabular} \arrow[rr, dashed, shift left, "\text{Similarity}"]  & &  \begin{tabular}{c} $\mathbf{q}_{n+1} = \Phi_{\Delta t}(\mathbf{q}_n)$  \\ $\mathbf{z}^R_{n+1} = T^{*R}\Phi^{-1}_{\Delta t}(\mathbf{z}^R_n)$  \end{tabular} \arrow[ll, dashed, shift left] 
    \end{tikzcd},
\end{equation}
where the similarity transformation is given by $T^{*L}\Phi^{-1}_{\Delta t} = P^{-*R}(T^{*R}\Phi^{-1}_{\Delta t}) P^{*R}$; we have used dashed arrows to emphasize that the similarity transformation is only exact assuming exact arithmetic and may generally produce different results when using floating point arithmetic. Substituting this similarity transformation into the bottom left vertex of diagram \eqref{diagram:preconditioning}, we have
\begin{align*}
    \mathbf{z}_{n+1}^L &= T^{*L}\Phi^{-1}_{\Delta t}(\mathbf{z}^L_n) =P^{-*R}(T^{*R}\Phi^{-1}_{\Delta t}) P^{*R} \mathbf{z}^L_n.
\end{align*}
Equivalently, 
\begin{align}\label{eq:cotangent-precondition}
    P^{*R}\mathbf{z}_{n+1}^L &= T^{*R}\Phi^{-1}_{\Delta t} ( P^{*R} \mathbf{z}^L_n).
\end{align}
Taking $ \mathbf{z}_{n+1}^R = P^{*R}\mathbf{z}_{n+1}^L $ of course reproduces the bottom right vertex of diagram \eqref{diagram:preconditioning}, but note that in general these can produce different numerical results with floating point arithmetic. In particular, if the linear system associated with computing $\mathbf{z}^R_{n+1} = T^{*R}\Phi^{-1}_{\Delta t}(\mathbf{z}^R_n)$ is poorly conditioned, we can instead utilize the formulation \eqref{eq:cotangent-precondition} and view $P^{*R}$ as a preconditioner. 

To elaborate further on this idea of a preconditioner as arising from a choice of duality pairing, we consider the case where the discrete state dynamics $\mathbf{q}_{n+1} = \Phi_{\Delta t}(\mathbf{q}_n)$ is preconditioned by an invertible matrix $P$. Defining $\tilde{\mathbf{q}}_n = P^{-1} \mathbf{q}_n$, the preconditioned discrete state dynamics is given by
$$ P\tilde{\mathbf{q}}_{n+1} = \Phi_{\Delta t}(P\tilde{\mathbf{q}}_n). $$
Equivalently,
$$ \tilde{\mathbf{q}}_{n+1} = (P^{-1} \circ \Phi_{\Delta t} \circ P) (\tilde{\mathbf{q}}_n), $$
which is a one-step method $\tilde{\mathbf{q}}_{n} \mapsto \tilde{\mathbf{q}}_{n+1}$. Computing the cotangent lift of this one-step method with respect to the standard duality pairing $S$, noting that the cotangent lift of a composition of maps is the composition of their cotangent lifts composed in reverse order, we have
$$ \tilde{\mathbf{p}}_{n+1} =  P^T T^{*S}_{P\tilde{\mathbf{q}}_n}\Phi_{\Delta t} P^{-T} \tilde{\mathbf{p}}_n. $$
Equivalently, since $P\tilde{q}_n = q_n$, this can be expressed as
$$ P^{-T} \tilde{\mathbf{p}}_{n+1} = T^{*S}_{\mathbf{q}_n}\Phi_{\Delta t} P^{-T} \tilde{\mathbf{p}}_n. $$
Comparing this to the cotangent lift of the discrete state dynamics without preconditioning, $\mathbf{q}_{n+1} = \Phi_{\Delta t}(\mathbf{q}_n)$, given by $p_{n+1} = T^{*S}_{\mathbf{q}_n}\Phi_{\Delta t} p_n$, we see that they are precisely related by the transformation \eqref{eq:cotangent-precondition}. Thus, the adjoint equation of the preconditioned discrete state dynamics formed with respect to the standard duality pairing is equivalent to the adjoint equation of the discrete state dynamics without preconditioning formed with respect to the duality pairing defined by $\langle \mathbf{w}, \mathbf{v}\rangle_P = \mathbf{w}^T P \mathbf{v}$.  Furthermore, this shows that given a preconditioner $P$ for the discrete state dynamics, the preconditioner for the adjoint equation induced by the cotangent lift is $P^{-T}$. This choice of adjoint preconditioner, given by the inverse transpose of the $P$, can be seen in specific problems in the literature \cite{ScJi2005, HaWa2013}.

\begin{remark}
    In the commutative diagrams presented in this paper, we've included explicitly the standard duality pairing (as this is standard in the literature for forming adjoint systems and similarly in deriving optimility conditions, i.e., the KKT conditions) as well as the mass-matrix induced duality pairing since, as we have seen, it arises naturally in the context of Galerkin semi-discretizations. However, our discussion applies generally to arbitrary choices of duality pairings and, as we have seen, different choices of duality pairings can be put into the practical perspective of preconditioning.
\end{remark}

\begin{remark}[Equilibrium preservation]\label{rmk:eq-preservation}
In non-dissipative PDEs, often one is interested in preservation of equilibria (also known as well-balanced schemes), e.g., \cite{fjordholm2011well}, such that physical equilibria are maintained post spatial and temporal discretization. Consider laminar flow through a domain with constant inflow (Dirichlet) boundary conditions: if initial conditions are in equilibrium (in this case constant and equal to inflow boundary conditions), discretization in space and time should maintain equilibrium for arbitrarily long time. For this example, the spatial discretization would typically have zero row-sums (e.g., consider finite-difference discretizations of advection), wherein the constant vector (laminar flow) is in the nullspace of $M^{-1}K$, and any one-step explicit integration scheme has solution update $\mathbf{g}_{n+1} = \mathbf{g}_n + \Delta tp(M^{-1}K)\mathbf{g}_n = \mathbf{g}_n$, for some polynomial $p(M^{-1}K)$. However, in the standard duality adjoint \eqref{Semi-discrete Adjoint Canonical System}, the resulting scheme is unlikely to preserve equilibria, as $K^TM^{-T}$ will have zero column sums rather than row sums, and constant vectors will no longer be invariant under explicit integration schemes. In contrast, the mass-induced duality pairing \eqref{Semi-discrete Adjoint System} will facilitate equilbiria preservation in this case, as the natural discretization of $K^T$ will maintain the constant nullspace of $K$, invariant to nonuniform scaling by the mass matrix. General equilibria preservation is more complex, but we believe the mass-induced duality pairing is the most natural way to maintain equilbrium preservation in the adjoint equation, while also maintaining discrete symplectic structure of the full adjoint system.
\end{remark}

\section*{Conclusion}
In this paper, we developed a Hamiltonian formulation of the adjoint system associated with an evolutionary partial differential equation. This led to natural geometric characterizations of structure-preserving semi-discretization and time integration in terms of semi-discrete and fully discrete adjoint-variational quadratic conservation laws. In particular, the commutativity of DtO with OtD can be uniquely characterized by these adjoint-variational quadratic conservation laws. 

For future research, we plan to explore applications of this geometric framework in constructing robust geometric discretizations of adjoint systems for evolutionary PDEs. An interesting related direction would be to combine semi-discretization and time integration procedures into a single space-time discretization procedure and analogously examine the question of DtO versus OtD in this more general setting. It is plausible that this question could be analogously characterized in terms of multisymplectic geometry, which is the space-time generalization of symplectic geometry, and multisymplectic discretizations \cite{Ch2005,MaSh1999,BrRe2001,TrLe2022}. Viewing the adjoint system as an equation in space-time instead of as an evolution equation could lead to efficient methods for structure-preserving space-time adjoint sensitivity analysis, utilizing unified space-time discretization methods, e.g.,\cite{hulbert1990space,rhebergen2013space}, and parallel all-at-once space-time solvers, e.g., \cite{horton1995space,danieli2022space}.

\section*{Acknowledgements}
The authors would like to thank the reviewers for their careful review of the paper and for their helpful comments and suggestions. BKT was supported by the Marc Kac Postdoctoral Fellowship at the Center for Nonlinear Studies at Los Alamos National Laboratory. BSS was supported by the Laboratory Directed Research and Development program of Los Alamos National Laboratory under project number 20220174ER. ML was supported in part by NSF under grants DMS-1345013, CCF-2112665, DMS-2307801, by AFOSR under grant FA9550-23-1-0279. Los Alamos National Laboratory report LA-UR-24-22947.

\section*{Data Availability Statement}
Data sharing is not applicable to this article as no datasets were generated or analyzed during the current study.


\bibliographystyle{plainnat}
\bibliography{adjoint_pde.bib}

\end{document}